\documentclass[final,notitlepage,12pt,reqno]{amsart}

\usepackage[table]{xcolor}
\usepackage{etoolbox,mathtools}
\makeatletter
\patchcmd{\@makefnmark}{\fontsize}{\check@mathfonts\fontsize}{}{}
\makeatother

\makeatletter

\DeclareFontFamily{U}{mathx}{}
\DeclareFontShape{U}{mathx}{m}{n}{<-> mathx10}{}
\DeclareSymbolFont{mathx}{U}{mathx}{m}{n}
\DeclareMathAccent{\widecheck}{0}{mathx}{"71}

\def\smallunderbrace#1{\mathop{\vtop{\m@th\ialign{##\crcr
   $\hfil\displaystyle{#1}\hfil$\crcr
   \noalign{\kern3\p@\nointerlineskip}%
   \tiny\upbracefill\crcr\noalign{\kern3\p@}}}}\limits}
\makeatother

\UseRawInputEncoding
\usepackage{graphicx,extarrows}
\usepackage{calc}
\usepackage{url}
\newlength{\depthofsumsign}
\makeatletter
\let\I\@undefined
\makeatother

\usepackage{geometry}
\usepackage{indentfirst}
\usepackage[normalem]{ulem}
\usepackage{float}
\usepackage{amsthm}

\usepackage{enumitem}[2011/09/28]
\setenumerate{align=left, leftmargin=0pt,labelsep=.5em, labelindent=0\parindent,listparindent=\parindent,itemindent=*}
\usepackage{array}
\usepackage[all]{xy}

\usepackage{exscale,relsize}
\usepackage{multirow}

\usepackage{caption}[2012/02/19]

\usepackage{longtable,lscape}

\usepackage{colonequals}

\usepackage{amssymb,bm,amsmath}
\usepackage{amsfonts}

\usepackage[OT2,T1,T2A]{fontenc}
\usepackage[utf8x]{inputenc}
\usepackage[french,german,russian,english]{babel}
\usepackage{appendix}

\DeclareMathOperator{\Span}{span}

\DeclareMathOperator{\Shf}{\text{\cyrins{\textsf{Ш}}}}
\DeclareMathOperator{\shf}{\text{\cyrins{\textsf{ш}}}}

\DeclareMathOperator{\Li}{Li}

\DeclareMathOperator{\D}{d}
\DeclareMathOperator{\I}{Im}
\DeclareMathOperator{\R}{Re}
\DeclareMathOperator*{\Reg}{Reg}

\def\XXint#1#2#3{{\setbox0=\hbox{$#1{#2#3}{\int}$}       \vcenter{\hbox{$#2#3$}}\kern-.5\wd0}}

\def\eor{\hfill$ \square$}

\newcolumntype{L}{>{$}l<{$}}
\newcolumntype{C}{>{$}c<{$}}
\newcolumntype{R}{>{$}r<{$}}

\theoremstyle{plain}
\newtheorem{theorem}{Theorem}[section]
\newtheorem{proposition}[theorem]{Proposition}
\newtheorem{lemma}[theorem]{Lemma}
\newtheorem{corollary}[theorem]{Corollary}

\newtheorem{conjecture}[theorem]{Conjecture}

\newenvironment{remark}[1][Remark]{\begin{trivlist}
\item[\hskip \labelsep {\bfseries #1}]}{\end{trivlist}}

\theoremstyle{definition}

\numberwithin{equation}{section}

\usepackage{Baskervaldx}
\usepackage[baskervaldx]{newtxmath}
\usepackage[cal=cm,scr=rsfs,frak=euler]{mathalfa}

\usepackage[scr=rsfs]{mathalfa}
\DeclareMathAlphabet{\mathsf}{OT1}{\sfdefault}{m}{n}
\SetMathAlphabet{\mathsf}{bold}{OT1}{\sfdefault}{m}{n}
\DeclareSymbolFontAlphabet{\mathbb}{AMSb}
\DeclareRobustCommand{\cyrins}[1]{%
  \begingroup\fontfamily{erewhon-TLF}%
  \foreignlanguage{russian}{#1}%
  \endgroup
}

\usepackage{color}

\usepackage{eurosym}

\begin{document}

\pagenumbering{roman}
\selectlanguage{english}
\title{Galois descents of certain multiple polylogarithms}\dedicatory{Dedicated to 19260817, a prime number}\author{Yajun Zhou}
\address{Program in Applied and Computational Mathematics (PACM), Princeton University, Princeton, NJ 08544} \email{yajunz@math.princeton.edu}\curraddr{\textrm{} \textsc{Academy of Advanced Interdisciplinary Studies (AAIS), Peking University, Beijing 100871, P. R. China}}\email{yajun.zhou.1982@pku.edu.cn}

\date{\today}\thanks{\textit{Keywords}: Galois descents, multiple polylogarithms, cyclotomic multiple zeta values\\\indent\textit{MSC 2020}: 11M06, 11M32\\\indent * This work was supported in part  by the Applied Mathematics Program within the Department of Energy
(DOE) Office of Advanced Scientific Computing Research (ASCR) as part of the Collaboratory on
Mathematics for Mesoscopic Modeling of Materials (CM4).}


\begin{abstract}We prove some conjectures of K. C. Au concerning the  descents of cyclotomic levels in certain sums over multiple polylogarithms. Via answers to a question of Au in some particular cases, we also confirm Broadhurst's conjecture on honorary multiple zeta values at even weights. \end{abstract}

\maketitle
\pagenumbering{arabic}

\section{Introduction\label{sec:intro}}
Multiple polylogarithms (MPLs)  are defined by (analytic continuations of) \begin{align}
\Li_{a_1,\dots,a_n}(z_1,\dots,z_n)\colonequals {\sum\limits_{\ell_{1}>\dots>\ell_{n}>0}\prod\limits_{j=1}^n\frac{\smash[t]{z_{j}^{\ell_{j}}}}{\ell_j^{a_j}}}.\label{eq:Mpl_defn}
\end{align} Here, the right-hand side of \eqref{eq:Mpl_defn}  converges absolutely  for  $ a_1,\dots,a_n\in\mathbb Z_{>0}, \prod_{j=1}^m|z_j|<1,m\in\mathbb Z\cap[1,n]$. The series in question also converges (though not necessarily in the mode of absolute convergence) when  $ (a_1,z_1)\neq(1,1)$ and $ |z_1|=\dots=|z_n|=1$. A convergent MPL $ \Li_{a_1,\dots,a_n}(z_1,\dots,z_n)$ is said to have weight $ a_1+\dots+a_n$ and depth $n$.

Using convergent MPLs, one may introduce the $ \mathbb Q$-vector space spanned by cyclotomic multiple zeta values (CMZVs) of weight $ w\in\mathbb Z_{>0}$ and level $N\in\mathbb Z_{>0}$:\begin{align}
 \mathfrak Z_{w}(N)\colonequals\Span_{\mathbb Q}\left\{\Li_{a_1,\dots,a_n}(z_1,\dots,z_n)\colonequals \smash[b]{\sum\limits_{\ell_{1}>\dots>\ell_{n}>0}\prod\limits_{j=1}^n\frac{\smash[t]{z_{j}^{\ell_{j}}}}{\ell_j^{a_j}}}\middle|\begin{smallmatrix}a_1,\dots,a_n\in\mathbb Z_{>0}\\z_{1}^{N}=\dots=z_n^N=1\\\sum _{j=1}^{n}a_{j}=w\\(a_1,z_1)\neq(1,1)\end{smallmatrix} \right\}.\label{eq:Zk(N)}
\end{align}Conventionally, one sets $ \mathfrak Z_{0}(N) \colonequals \mathbb Q$. CMZVs of levels 1 and 2 are known as multiple zeta values (MZVs) and alternating multiple zeta values (AMZVs), respectively. Customarily, (A)MZVs are  denoted by \begin{align}
\zeta_{A_{1},\dots,A_n}\equiv\Li_{|A_1|,\dots,|A_n|}\bigg(\frac{A_1}{|A_1|},\dots,\frac{A_n}{|A_n|}\bigg)\in \mathfrak Z_{w}(2),
\end{align}where $ A_1,\dots,A_n\in\mathbb Z\smallsetminus\{0\}$ and $ A_1\neq1$. With symbolic computations involving provable algebraic relations\footnote{In particular, Au's software \cite{Au2022a} gives automated reductions  in $ \mathfrak Z_{w}(N)$ for $ w\in\{1,2,3,4,5,6\}$, $N=4$ and  $ w\in\{1,2,3,4,5\}$, $ N=6$.} among CMZVs of mod\-er\-ate-sized weights \cite{Au2022a}, Kam Cheong Au discovered some remarkable patterns concerning \begin{align}\Li_{a_1,\dots,a_n}(z,\mathbf 1_{n-1})
\equiv\Li_{a_1,\dots,a_n}(z,\underset{n-1}{\underbrace{1,\dots,1}})\colonequals \sum_{\ell_{1}>\dots>\ell_{n}>0}\frac{z^{\ell_{1}}}{\prod_{j=1}^n\ell_j^{a_j}}\quad(\text{where } |z|\leq1)
\end{align}and \begin{align}
g_{w,n}(z)\colonequals {}&(-1)^n\sum_{\substack{a_1,\dots,a_n\in\mathbb Z_{>0}\\a_1+\dots+a_n=w}} \Li_{a_1,\dots,a_n}(z,\mathbf 1_{n-1}),\label{eq:g_w,n}
\end{align}
as recapitulated in the conjecture below.

\begin{conjecture}[K. C. Au {\cite[Conjecture 1.5]{Au2022a}}]\label{conj:Au1.5}\begin{enumerate}[leftmargin=*,  label=\emph{(\alph*)},ref=(\alph*),
widest=d, align=left] \item
For each fixed pair of weight $w\in\mathbb Z_{>0}$ and depth $ n\in\mathbb Z\cap[1,w]$, one has  {\allowdisplaybreaks
\begin{align}
\R g_{w,n}\bigg( \frac{1+i}{2} \bigg)\in{}&\mathfrak Z_w(2),\label{eq:Au1.5a1}\\\R g_{w,n}( i )\in{}&\mathfrak Z_w(2),\label{eq:Au1.5a2}
\end{align}
}where each individual summand is in $ \mathfrak Z_w(4)$.\footnote{In an unlabeled formula right above  \cite[Conjecture 1.5]{Au2022a}, Au stated a borderline case of \eqref{eq:Au1.5a1}, where $n=1$.}
\item Set $ \varrho\colonequals  e^{\pi i/3}$. For each fixed pair of weight $w\in\mathbb Z_{>0}$ and depth $ n\in\mathbb Z\cap[1,w]$, one has \begin{align}
\R g_{w,n}(\varrho)\in{}&\mathfrak Z_w(1),\label{eq:Au1.5b}
\end{align}where each individual summand is in $ \mathfrak Z_w(6)$.\footnote{\label{fn:desc3}According to either \cite[Example 3.9]{Au2022a} or \cite[Proposition 3.3]{SunZhou2026MCV}, there is a sharper result $ \mu_{a_1,\dots, a_n}\in\mathfrak Z_{a_1+\dots+a_n}(3)$.  Also note that one may verify \eqref{eq:Au1.5b} for $ w\in\mathbb Z\cap[1,8]$ and $ n\in\mathbb Z\cap[1,w]$, using the algorithms given in \cite[\S\S3.1--3.2]{SunZhou2026MCV}. }
 \end{enumerate}
\end{conjecture}

Au referred to these  relations as ``Galois descent for multiple polylogarithms---a subtle and
largely unexplored phenomenon''.
 Here, 
 {the   wording  ``largely unexplored'' was particularly  judicious. Galois descents of CMZVs did not belong to an uncharted territory before Au's work~\cite{Au2022a}: Broadhurst's numerical experiments in 2014 suggested the presence of honorary multiple zeta values  (or just ``hononary MZVs'' for short) at even weights  \cite[Conjecture 5]{Broadhurst2014MDV}\begin{align}
\R \Li_{w-1,1}(\varrho,1)\in\mathfrak Z_w(1),\quad \text{for }w\in 2\mathbb Z_{>0},\label{eq:BroadhurstConj5}
\end{align} while  Xu's empirical discovery  \cite[Conjecture 7.2]{Xu2019nonlinEuler}  \begin{align}\begin{split}&\zeta_{-2m,2}+2m\zeta_{-2m-1,1}\equiv\Li_{2m,2}(-1,1)+2m\Li_{2m+1,1}(-1,1)\\\in{}& \mathfrak Z_{2m+2}(1)\cap\Span_{\mathbb Q}\{\zeta_{a,2m+2-a}\equiv\Li_{a,2m+2-a}(1,1)|a\in\mathbb Z\cap[1,2m+1]\}\label{eq:XuConj}
\end{split}\end{align}  in 2019 has been  proved very recently by Zheng--Yang  \cite{ZhengYang2025}  (in part) and by Charlton  \cite{Charlton2026} (in full). Moreover, by working with the categories of mixed Tate motives, Glanois \cite{Glanois2016} established criteria for Galois descents in motivic versions for CMZVs of levels $N\in\{ 2,3,4,6,8\}$, which paved the way for the proofs of Zheng--Yang~\cite{ZhengYang2025} and  Charlton~\cite{Charlton2026}.}

In this note, we not only answer Au's questions \eqref{eq:Au1.5a1}--\eqref{eq:Au1.5b} in the positive, but also confirm Broadhurst's \eqref{eq:BroadhurstConj5}.

We open with a brief  overview of our  toolkit in  \S\ref{sec:MPL_GPL}, before establishing ``depth-blind'' versions of \eqref{eq:Au1.5a1} and \eqref{eq:Au1.5b}, which replace their left-hand sides by (alternating) sums over $ n\in\mathbb Z\cap[1,w]$.  In \S\ref{sec:desc3}, we prove the full versions of  both \eqref{eq:Au1.5b} and \eqref{eq:BroadhurstConj5},  upon a conversion of $ g_{w,n}(z)$ from its defining multi-index summation [see \eqref{eq:g_w,n}] to an integral representation.
Afterwards, these techniques are extended to the complete verifications of \eqref{eq:Au1.5a1}  and \eqref{eq:Au1.5a2} in \S\ref{sec:desc4}, along with another conjecture of Au \cite[Conjecture 1.6]{Au2022a}. \section{ Depth-blind Galois descents of multiple polylogarithms \label{sec:MPL_GPL}}
\subsection{Generalized polylogarithms and their properties\label{subsec:GPLprop}}
The generalized polylogarithms (GPLs) are   defined through the recursion \cite[(2.1)]{Frellesvig2016} \begin{align}\notag\\[-12pt]
G(\alpha_{1},\dots,\alpha_n;z)\colonequals{\displaystyle\int_0^z\frac{G(\alpha_2,\dots,\alpha_n;x)\D x}{x-\alpha_1}},\quad\text{for }\displaystyle\smash[t]{\sum_{k=1}^n|}\alpha_k|\neq0\label{eq:GPL_rec}
\end{align}and the initial condition \cite[(2.2)]{Frellesvig2016}\begin{align}
{ G(\boldsymbol0_{n};z)\equiv G(\underset{n }{\underbrace{0,\dots,0 }};z)}\colonequals{\dfrac{\log^nz}{n!}},\quad G(-\!\!-;z)\colonequals1.\label{eq:GPL0}
\end{align}To better appreciate the combinatorial structure of MPLs, we convert them to GPLs  (see \cite[(2.5)]{Frellesvig2016} or \cite[(1.3)]{Panzer2015}): \begin{align}
\begin{split}\Li_{a_1,\dots,a_n}(z_1,\dots,z_n)={}&(-1)^{n}G\left(\smash[b]{\underset{a_1-1 }{\underbrace{0,\dots,0 }}},\frac{1}{z_{1}},\smash[b]{\underset{a_2-1 }{\underbrace{0,\dots,0 }}},\frac{1}{z_{1}z_2},\dots,\smash[b]{\underset{a_n-1 }{\underbrace{0,\dots,0 }}},\frac{1}{\prod_{j=1}^nz_j};1\right)\\[-5pt]\\\equiv{}&(-1)^nG\left(  \boldsymbol0_{a_{1}-1},\frac{1}{z_{1}},\boldsymbol0_{a_{2}-1},\frac{1}{z_{1}z_2},\dots,\boldsymbol0_{a_{n}-1},\frac{1}{\prod_{j=1}^nz_j};1\right).
\end{split}\label{eq:MPLtoGPL}
\end{align}Note that when  $ \alpha_n\neq0$, the GPL recursion \eqref{eq:GPL_rec} effectively defines $G(\alpha_{1},\dots,\alpha_n;z)$ as a function of the homogeneous coordinates $ [\alpha_{1}:\cdots:\alpha_n:z]$, namely  \cite[(2.3)]{Frellesvig2016}
\begin{align}
G(\alpha_{1},\dots,\alpha_n;z)=G(\sigma\alpha_{1},\dots,\sigma\alpha_n;\sigma z)\label{eq:GPLscaling}
\end{align}for $\sigma\neq0 $.
In particular, we have\begin{align}
\begin{split}\begin{split}\Li_{a_1,\dots,a_n}(z,\mathbf 1_{n-1})={}&(-1)^{n}G({\underset{a_1-1 }{\underbrace{0,\dots,0 }}},1,{\underset{a_2-1 }{\underbrace{0,\dots,0 }}},1,\dots,\smash[b]{\underset{a_n-1 }{\underbrace{0,\dots,0 }}},1;z)\\\equiv{}&(-1)^{n}G(\boldsymbol0_{a_1-1},1,\boldsymbol0_{a_2-1},1,\dots,\boldsymbol0_{a_n-1},1;z),
\end{split}\label{eq:LtoGPL}
\end{split}
\end{align}according to the MPL-GPL conversion \eqref{eq:MPLtoGPL}  and the scaling property \eqref{eq:GPLscaling}.

 The products of GPLs satisfy \cite[(2.4)]{Frellesvig2016} \begin{align}\label{eq:GPL_shuffle}G(\alpha_{1},\dots,\alpha_{j};z)G(\beta_{1},\dots,\beta_{k};z)=\sum_{\bm\gamma\in\bm \alpha\Shf\bm \beta }G(\gamma_{1},\dots,\gamma_{j+k}; z)\end{align}where the set $ \bm \alpha\Shf\bm \beta$ runs over all the shuffles of the vector components in  $ \bm \alpha$ and $\bm \beta$   that maintain the internal orders  within both vectors. 

\subsection{Equations of motion for  ``depth-blind'' Galois descents\label{subsec:depth-blind}}
In view of \eqref{eq:LtoGPL}, we can rewrite \eqref{eq:g_w,n} as\begin{align}
g_{w,n}(z)\colonequals {}&\sum_{\substack{\alpha_1,\dots,\alpha_{w-1}\in\{0,1\}\\\alpha_1+\dots+\alpha_{w-1}+1=n}}G(\alpha_1,\dots,\alpha_{w-1},1;z) .\tag{\ref{eq:g_w,n}$'$}
\end{align}
Now, we settle ``depth-blind'' versions of \eqref{eq:Au1.5a1} and \eqref{eq:Au1.5b}
for all the positive weights, as described in the two propositions below.\begin{proposition}

\label{prop:G_avg_desc}Define\begin{align}
\mathsf G_w(z)\colonequals {}&\sum_{n=1}^w g_{w,n}(z)
\end{align}for all $ w\in\mathbb Z_{>0}$. We have \begin{align}
\mathsf G_w(z)+\mathsf G_w(1-z)\in\Span_\mathbb Q\left\{ Z_\ell [\log z+\log(1-z)]^{w-\ell}\middle|\begin{smallmatrix}\ell,w-\ell\in\mathbb Z_{\geq0}\\Z_\ell\in\mathfrak Z_\ell(1)\end{smallmatrix}\right\}.\label{eq:Gw_refl}
\end{align}In particular, this implies that \begin{align}
\R \mathsf G_w\bigg(\frac{1+i}{2}\bigg)\in{}&\mathfrak Z_w(2),\\\R \mathsf G_{w}(\varrho)\in{}&\mathfrak Z_w(1).
\end{align}
\end{proposition}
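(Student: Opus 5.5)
The plan is to exploit the fact that summing over \emph{all} depths turns $\mathsf G_w$ into an iterated integral along the single form $\D\log[z(1-z)]$, which can be solved in closed form. By ($\ref{eq:g_w,n}'$), $\mathsf G_w(z)=\sum_{\alpha_1,\dots,\alpha_{w-1}\in\{0,1\}}G(\alpha_1,\dots,\alpha_{w-1},1;z)$, summed over all binary words. Differentiating via the recursion \eqref{eq:GPL_rec} strips off $\alpha_1$, and the remaining word again ranges over all binary words of length $w-2$ followed by $1$. Hence I expect the ``equations of motion''
\begin{align*}
\frac{\D\mathsf G_w(z)}{\D z}=\left(\frac1z+\frac1{z-1}\right)\mathsf G_{w-1}(z)=\frac{\D\log[z(1-z)]}{\D z}\,\mathsf G_{w-1}(z),\qquad \mathsf G_1(z)=G(1;z)=\log(1-z),
\end{align*}
together with $\mathsf G_w(0)=0$.

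Next I package these into the generating function $H(t,z)\colonequals\sum_{w\geq1}\mathsf G_w(z)t^w$. Writing $L(z)\colonequals\log z+\log(1-z)$, the recursion becomes $\partial_zH=tL'(z)H+t/(z-1)$ with $H(t,0)=0$. I would solve it by variation of constants:
\begin{align*}
H(t,z)=-t\,[z(1-z)]^{t}\int_0^z x^{-t}(1-x)^{-t-1}\D x .
\end{align*}
To justify this, note that the $\mathsf G_w$ grow at most geometrically on compacta of $\mathbb C\smallsetminus[1,\infty)$, so $H$ is analytic in $t$ near $0$. The integral also converges for small real $t$, so the closed form matches $H$ term by term in $t$. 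Applying the same formula at $1-z$ and substituting $x\mapsto1-x$ gives $H(t,1-z)=-t[z(1-z)]^t\int_z^1x^{-t-1}(1-x)^{-t}\D x$.

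The key step is to use the exact derivative
\begin{align*}
\frac{\D}{\D x}[x(1-x)]^{-t}=t\,x^{-t}(1-x)^{-t-1}-t\,x^{-t-1}(1-x)^{-t}.
\end{align*}
This converts the second integrand into the first one plus a boundary term. Working first with $-1<\R t<0$, where every boundary contribution converges and $[x(1-x)]^{-t}\to0$ at $x=1$, the two integrals combine to a Beta integral over $[0,1]$. I expect
\begin{align*}
H(t,z)+H(t,1-z)=-t\,e^{tL(z)}B(1-t,-t)-1=e^{tL(z)}\frac{\Gamma(1-t)^2}{\Gamma(1-2t)}-1,
\end{align*}
using $-t\Gamma(-t)=\Gamma(1-t)$. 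Both sides are analytic in $t$ near $0$, so this extends by analytic continuation.

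Finally, $\log\Gamma(1-t)=\gamma t+\sum_{k\geq2}\zeta(k)t^k/k$, so
\begin{align*}
\log\frac{\Gamma(1-t)^2}{\Gamma(1-2t)}=\sum_{k\geq2}\frac{(2-2^k)\zeta(k)}{k}t^k ,
\end{align*}
in which Euler's constant cancels. Exponentiating gives a series whose $t^\ell$ coefficient lies in $\mathfrak Z_\ell(1)$, since MZVs are closed under products by the stuffle relation. Extracting the coefficient of $t^w$ then yields \eqref{eq:Gw_refl}.

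For the specializations, the $\mathsf G_w$ have real coefficients and are real-analytic off the cut, so $\mathsf G_w(\bar z)=\overline{\mathsf G_w(z)}$. At $z=\frac{1+i}2$ and at $z=\varrho$ we have $1-z=\bar z$, so the left-hand side of \eqref{eq:Gw_refl} equals $2\R\mathsf G_w(z)$. For $z=\frac{1+i}2$, $L=\log|z|^2=-\log2=\Li_1(-1)\in\mathfrak Z_1(2)$, and products of AMZVs are AMZVs, which gives the $\mathfrak Z_w(2)$ claim. For $z=\varrho$, $L=\log|\varrho|^2=0$, so only the $\ell=w$ term survives and lies in $\mathfrak Z_w(1)$.

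The main obstacle is the analytic bookkeeping in the Beta-integral step. One must choose a common strip in $t$ where all integrals and boundary terms converge, and check that it is consistent with the power-series definition of $H$ near $t=0$. If that proves delicate, I would instead verify the identity order by order in $t$, working with regularized iterated integrals.
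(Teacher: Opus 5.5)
Your proof is correct, and it reaches the integration constants by a different route from the paper.

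\textbf{The paper's route.} The paper starts from the same summed equation of motion but never solves it in closed form. It notes that $S_w(z)\colonequals\mathsf G_w(z)+\mathsf G_w(1-z)$ obeys $\D S_w=S_{w-1}\,\D L$ with $S_1=L$. Hence $S_w$ must equal $\sum_{\ell}c_\ell L^{w-\ell}/(w-\ell)!$ for some constants $c_\ell$. It then identifies $c_w=\Reg_{z\to0}\mathsf G_w(1-z)$, a regularized value of $\{0,1\}$-GPLs at $1$. Membership in $\mathfrak Z_w(1)$ is called a routine exercise; a later remark makes it explicit via Nielsen-polylogarithm integrals and K\"olbig's argument.

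\textbf{Your route.} You package all weights into $H(t,z)$ and solve the ODE by variation of constants. The exact derivative of $[x(1-x)]^{-t}$ then collapses $H(t,z)+H(t,1-z)$ into a Beta integral, giving $e^{tL}\Gamma(1-t)^2/\Gamma(1-2t)-1$.

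\textbf{What each buys.} The paper stays inside the GPL formalism and needs no analysis in an auxiliary parameter, but it leans on regularization theory for the constants. Your closed form makes the constants explicit. The paper's later refinement, that they lie in $\mathbb Q[\pi^2,\zeta_3,\zeta_5,\dots]$, therefore comes for free. Indeed $\Gamma(1-t)^2/\Gamma(1-2t)-1$ is exactly the generating function of the paper's expression $\frac{(-1)^w}{(w-1)!}\int_0^1\frac{[\log t+\log(1-t)]^{w-1}-\log^{w-1}t}{t}\D t$. The price is the analytic bookkeeping you flagged.

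\textbf{One point to make explicit.} For fixed $t$ with $\R t>0$, the condition $H(t,0)=0$ does not single out your closed form, because $e^{tL(z)}$ also vanishes at $z=0$. The identification must therefore be coefficientwise. Show that the $t^w$-coefficients of your closed form satisfy the same recursion and tend to $0$ as $z\to0$. This is straightforward: the closed form is jointly analytic for $\R t<1$ and is $O(|t|\,|z|)$ uniformly for small $|t|$, so Cauchy estimates finish it.
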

\begin{proof}Thanks to the GPL recursion \eqref{eq:GPL_rec}, we obtain an equation of motion\footnote{Since empty sums are zero, we   have $ g_{w,n}(z)=0$ for $ n\notin\mathbb Z\cap[1,w]$. }\begin{align}
\frac{\partial g_{w,n}(z)}{\partial z}=\frac{ g_{w-1,n}(z)}{z}+\frac{g_{w-1,n-1}(z)}{z-1}.\label{eq:gwn_evolv}
\end{align}Summing over all the depths $ n\in\mathbb Z\cap[1,w]$, we get \begin{align}
\frac{\partial\mathsf G_w(z)}{\partial z}={}&\left( \frac{1}{z} +\frac{1}{z-1}\right)\mathsf G_{w-1}(z).
\end{align}Trading $z$ for $1-z$, one has\begin{align}
\frac{\partial\mathsf G_w(1-z)}{\partial z}={}&\left( \frac{1}{z} +\frac{1}{z-1}\right)\mathsf G_{w-1}(1-z).
\end{align}To deduce \eqref{eq:Gw_refl} from the recursion \begin{align}
\frac{\partial[\mathsf G_w(z)+\mathsf G_w(1-z)]}{\partial z}={}&\left( \frac{1}{z} +\frac{1}{z-1}\right)[\mathsf G_{w-1}(z)+G_{w-1}(1-z)]
\end{align}with initial condition\begin{align}
\mathsf G_1(z)+\mathsf G_1(1-z)=\log z+\log(1-z),
\end{align}one only needs to check the regularized limits \cite[\S2.3]{Panzer2015}\begin{align}
\Reg_{z\to0}[\mathsf G_w(z)+\mathsf G_w(1-z)]=\Reg_{z\to0}\mathsf G_w(1-z)\in \mathfrak Z_w(1),\label{eq:RegG(1-z)}
\end{align}which is a routine exercise.
[By working a little harder, one can actually represent $ \Reg_{z\to0}\mathsf G_w(1-z)$ as a polynomial (in rational coefficients) of $ \pi^2$ and all the odd zeta values $ \zeta_{2n+1}\equiv\Li_{2n+1}(1),n\in\mathbb Z_{>0}$. See  \S\ref{subsec:shf_gwn} for details.]

The rest  follow from the observations that $ \log\frac{1+i}{2}+\log\frac{1-i}{2}=\Li_1(-1)\in\mathfrak Z_1(2)$ and $ \log \varrho +\log(1-\varrho)=0$, as well as the GPL shuffles in \eqref{eq:GPL_shuffle}.\end{proof}\begin{proposition}\label{prop:G_alt_avg_desc}Define\begin{align}
\widetilde{\mathsf G}_w(z)\colonequals {}&\sum_{n=1}^w (-1)^{n}g_{w,n}(z)
\end{align}for all $ w\in\mathbb Z_{>0}$. For $ |z|\le1$ and $ z\neq1$, we have
\begin{align}
\widetilde{\mathsf G}_w(z)= g_{w,1}\bigg(\frac{z}{z-1}\bigg)\equiv-\Li_w\bigg(\frac{z}{z-1}\bigg),
\label{eq:gw1Landen}\end{align}which entails \begin{align}
\R \widetilde{\mathsf G}_1\bigg(\frac{1+i}{2}\bigg)=\frac{\log2}{2}\in\mathfrak Z_1(2),\quad \R \widetilde {\mathsf G}_1(\varrho)=0\in\mathfrak Z_1(1)=\{0\}
\end{align}and\begin{align}
\R \widetilde{\mathsf G}_w\bigg(\frac{1+i}{2}\bigg)\in \mathbb Q\zeta _{w}\subseteq\mathfrak Z_w(1),\\\R \widetilde{\mathsf G}_w(\varrho)\in \mathbb Q\zeta _{w}\subseteq\mathfrak Z_w(1),
\end{align}for all $ w\in\mathbb Z_{>1}$.
\end{proposition}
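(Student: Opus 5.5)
Following the proof of Proposition~\ref{prop:G_avg_desc}, I would start from the equation of motion \eqref{eq:gwn_evolv} and take the alternating sum over $n$. Since $(-1)^n g_{w-1,n-1}=-(-1)^{n-1}g_{w-1,n-1}$, this gives
\begin{align*}
\frac{\partial\widetilde{\mathsf G}_w(z)}{\partial z}=\left(\frac1z-\frac1{z-1}\right)\widetilde{\mathsf G}_{w-1}(z)=-\frac{\widetilde{\mathsf G}_{w-1}(z)}{z(z-1)} .
\end{align*}
The initial data are $\widetilde{\mathsf G}_1(z)=-g_{1,1}(z)=G(1;z)=\log(1-z)$, and $\widetilde{\mathsf G}_w(0)=0$ for all $w\geq1$, because every GPL with last letter $1$ vanishes at $z=0$.

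Next I would check that $F_w(z)\colonequals-\Li_w\bigl(\frac{z}{z-1}\bigr)$ obeys the same recursion with the same initial data. Put $u=z/(z-1)$. Then $1-u=\frac{1}{1-z}$, so $F_1(z)=\log(1-u)=-\log(1-z)$. To match $\widetilde{\mathsf G}_1(z)=\log(1-z)$, the sign must be fixed as $g_{1,1}(u)=-\Li_1(u)$. Indeed, by \eqref{eq:g_w,n} with $n=1$ we have $g_{w,1}=-\Li_w$, so $g_{1,1}(u)=-\Li_1(u)=\log(1-u)=-\log(1-z)$. This is the negative of $\widetilde{\mathsf G}_1$, so I need to recheck the sign convention of $\widetilde{\mathsf G}_1$ itself. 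We have $\widetilde{\mathsf G}_1=(-1)^1g_{1,1}=\Li_1(z)=-\log(1-z)$. This agrees with $g_{1,1}(u)$, so the base case holds.

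For the inductive step, $du/dz=-1/(z-1)^2$, and therefore
\begin{align*}
\frac{d}{dz}\Bigl[-\Li_w(u)\Bigr]=-\frac{\Li_{w-1}(u)}{u}\cdot\frac{-1}{(z-1)^2}=\frac{\Li_{w-1}(u)}{z(z-1)}=-\frac{F_{w-1}(z)}{z(z-1)} ,
\end{align*}
which is the same recursion. Both sides vanish at $z=0$, so induction on $w$ gives \eqref{eq:gw1Landen}. The identity holds first near $z=0$ and then on the whole region $|z|\le1$, $z\neq1$ by analytic continuation. This uses that $u$ avoids the branch cut $[1,\infty)$ of $\Li_w$: we have $u\in[1,\infty)$ exactly when $z\in[1,\infty)$, and that set meets the region only at the excluded point $z=1$. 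Continuity up to the boundary handles $|z|=1$.

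For the specific points:
\begin{itemize}
\item At $z=\frac{1+i}{2}$, $u=\frac{1+i}{-1+i}=-i$, so $\R\widetilde{\mathsf G}_w=-\R\Li_w(-i)$.
\item At $z=\varrho$, $\varrho-1=\varrho^2$, so $u=\varrho^{-1}=\bar\varrho$ and $\R\widetilde{\mathsf G}_w=-\R\Li_w(\bar\varrho)=-\R\Li_w(\varrho)$.
\end{itemize}

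The classical distribution relations then finish the computation. For $w=1$, $-\R\Li_1(-i)=\R\log(1+i)=\frac{\log2}{2}$, and $-\R\Li_1(\bar\varrho)=\log|1-\bar\varrho|=0$. For $w\ge2$ (absolutely convergent):
\begin{align*}
\R\Li_w(-i)=\sum_{k}\frac{\cos(k\pi/2)}{k^w}=\frac{1}{2^w}\Li_w(-1)=-2^{-w}(1-2^{1-w})\zeta_w .
\end{align*}
Similarly, $\R\Li_w(\varrho)=\sum_k\cos(k\pi/3)k^{-w}$ is a rational multiple of $\zeta_w$. This follows by splitting $k$ modulo $6$, or from $\Li_w(\varrho)+\Li_w(\bar\varrho)=2^{1-w}\cdot 3^{1-w}\cdots$, and most simply from the identity $\Li_w(z)+\Li_w(-z)=2^{1-w}\Li_w(z^2)$ together with $\sum_{\zeta^3=1}\Li_w(\zeta z)=3^{1-w}\Li_w(z^3)$ applied at $z=-1$.

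The only step needing care is the sign/initial-condition bookkeeping and the branch-cut argument for the continuation; the rest is routine.
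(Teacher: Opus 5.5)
Your proof is correct and is essentially the paper's own argument. The paper sums \eqref{eq:gwn_evolv} with alternating signs and notes that $\widetilde{\mathsf G}_w(z)-g_{w,1}\bigl(\frac{z}{z-1}\bigr)$ obeys the recursion with kernel $\frac1z-\frac1{z-1}$, vanishes at $z=0$, and is zero for $w=1$; it then quotes the same closed forms for $\R\Li_w(-i)$ and $\R\Li_w(\varrho)$.

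One line of your write-up needs fixing. In your opening statement of the base case, $-g_{1,1}(z)$ equals $-G(1;z)=-\log(1-z)$, not $G(1;z)=\log(1-z)$, as you yourself correct a few lines later.
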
\begin{proof}The function\begin{align}
\widetilde{\mathsf \Delta}_w(z)\colonequals\widetilde{\mathsf G}_w(z)-g_{w,1}\bigg(\frac{z}{z-1}\bigg) \end{align}is governed by the recursion [cf.\ \eqref{eq:gwn_evolv}] \begin{align}
\frac{\partial \widetilde{\mathsf \Delta}_w(z)}{\partial z}=\left( \frac{1}{z} -\frac{1}{z-1}\right)\widetilde{\mathsf \Delta}_{w-1}(z)
\end{align}and the boundary condition $ \widetilde{\mathsf\Delta}_w(0)=0$ for all  $ w\in\mathbb Z_{>1}$. In other words, one has\begin{align}
\widetilde{\mathsf \Delta}_w(z)=\int_{0}^z\left( \frac{1}{x} -\frac{1}{x-1}\right)\widetilde{\mathsf \Delta}_{w-1}(x)\D x
\end{align} for all  $ w\in\mathbb Z_{>1}$. Direct computation reveals that  $ \widetilde{\mathsf\Delta}_1(z)=0$, so we must have  $ \widetilde{\mathsf \Delta}_w(z)=0$ for all $ z\in\mathbb C$ and   $ w\in\mathbb Z_{>1}$.

To wrap up this proposition, simply note that  \begin{align}
\R\Li_w(-i)={}&\frac{1-2^{w-1}}{2^{2w-1}}\zeta _{w}\intertext{and}\R\Li_w(\varrho)={}&\left( 1-\frac{1}{2^{w-1}} \right)\left( 1-\frac{1}{3^{w-1}} \right)\frac{\zeta _{w}}{2}
\end{align}hold for all $ w\in\mathbb Z_{>1}$.
\end{proof}

\section{ Galois descents from hexagon vertices\label{sec:desc3}}
\subsection{Shuffle structure of Au's sums\label{subsec:shf_gwn}}The  ``depth-blind'' appetizers in \S\ref{subsec:depth-blind} primarily drew on the GPL recursion  \eqref{eq:GPL_rec},  without making heavy use of the GPL shuffles in  \eqref{eq:GPL_shuffle}.  In this subsection, we will produce identities of a different flavor, to investigate the ``in-depth'' structure of Au's sum rules.

We start from an application of the  GPL shuffles in  \eqref{eq:GPL_shuffle}:\begin{align}
G(1;z)G(\boldsymbol0_{w-1},1;z)=G(\boldsymbol0_{w-1},1,1;z)+g_{w,2}(z),\label{eq:gw2_prod}
\end{align}  or in terms of word shuffles,\begin{align}
1\shf \boldsymbol0_{w-1}1= \boldsymbol0_{w-1}11+\sum_{\substack{\alpha_1,\dots,\alpha_{w-1}\in\{0,1\}\\\alpha_1+\dots+\alpha_{w-1}+1=2}}\alpha_1\cdots\alpha_{w-1}1.
\tag{\ref{eq:gw2_prod}$'$}\label{eq:gw2_prod'}\end{align}   In the next lemma, we will generalize \eqref{eq:gw2_prod} and \eqref{eq:gw2_prod'} to all the available weights and depths. \begin{lemma}
For $ w\in\mathbb Z_{>0}$ and $ n\in\mathbb Z\cap[1,w]$, we have \begin{align}
g_{w,n}(z)=\sum_{m=1}^n(-1)^{m-1}G(\boldsymbol1_{n-m};z)G(\boldsymbol0_{w-n},\boldsymbol1_{m};z),\label{eq:gwn_prod_expn}
\end{align}with the understanding that $G(\boldsymbol0_{0},\boldsymbol1_{m};z)=G(\boldsymbol1_{m};z)$ and $ G(\boldsymbol1_{0};z)=1$. \end{lemma}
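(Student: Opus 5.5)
Writing $g_{w,n}$ as the sum of all words of length $w$ with exactly $n$ letters equal to $1$, the last letter being $1$, the plan is to prove \eqref{eq:gwn_prod_expn} at the level of words, in the shuffle algebra on the alphabet $\{0,1\}$. The shuffle product \eqref{eq:GPL_shuffle} then turns each word identity into the corresponding GPL identity.

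Set $S_{n}\colonequals\sum\alpha_1\cdots\alpha_{w-1}1$, summed over words with exactly $n$ ones in total. The key combinatorial fact is that $\boldsymbol1_{k}\shf\boldsymbol0_{w-n}\boldsymbol1_{m}$ is, up to multiplicity, the sum of all words with $w-n$ zeros and $k+m$ ones. To pin down the multiplicity, I would fix a word $u$ with $w-n$ zeros and $n$ ones, and let $t(u)$ be the length of its terminal block of ones, so $t(u)\ge0$. In $\boldsymbol1_{n-m}\shf\boldsymbol0_{w-n}\boldsymbol1_m$ (note $k+m=n$), each occurrence of $u$ corresponds to a choice of which $m$ of the ones of $u$ come from the second factor. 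These chosen ones must all come after the last zero of $u$, so they lie inside the terminal block. Conversely, any such choice works. Hence the coefficient of $u$ is $\binom{t(u)}{m}$.

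Therefore the right-hand side of \eqref{eq:gwn_prod_expn} equals
\[
\sum_u c(u)\,u,\qquad c(u)=\sum_{m=1}^{n}(-1)^{m-1}\binom{t(u)}{m}.
\]
For $t(u)\ge1$ this sum is $1-(1-1)^{t(u)}=1$, and for $t(u)=0$ it is $0$. Since $t(u)\le n$, no terms are lost by truncating at $m=n$. So the right-hand side is exactly the sum of all words with $n$ ones and $w-n$ zeros that end in $1$, which is $g_{w,n}(z)$ by (\ref{eq:g_w,n}$'$).

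The only delicate point is justifying the multiplicity count in the shuffle. The letters $1$ are indistinguishable, so I need to argue that shuffles correspond bijectively to subsets of positions of $u$ assigned to the second word. Consistency then forces that subset to be the positions of the last $m$ ones, or more precisely any $m$ ones after the last zero, each in order. This is the standard description of shuffle coefficients, and I would spell it out briefly. As a sanity check, the case $n=2$ recovers \eqref{eq:gw2_prod'}.
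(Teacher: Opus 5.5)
Your argument is correct, and it reaches the lemma by a different route from the paper.

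You work entirely in the shuffle algebra. The coefficient of a word $u$ (with $w-n$ zeros and $n$ ones) in $\boldsymbol1_{n-m}\shf\boldsymbol0_{w-n}\boldsymbol1_m$ is $\binom{t(u)}{m}$, because the $m$ ones of the second factor must sit after every zero of $u$, and any such choice works. Then $\sum_{m=1}^{n}(-1)^{m-1}\binom{t(u)}{m}=1-(1-1)^{t(u)}$ is the indicator of $u$ ending in $1$, and \eqref{eq:GPL_shuffle} transports the word identity to GPLs.

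The paper never counts shuffle multiplicities. It checks the cases $n=1$ and $n=w$, and uses the product rule to show that the discrepancy $\delta_{w,n}(z)$ obeys the same equation of motion \eqref{eq:gwn_evolv} as $g_{w,n}(z)$, with $\delta_{w,n}(0)=0$. It then eliminates $\delta_{w,n}$ by a double induction (depth outside, weight inside). The word form \eqref{eq:gwn_prod_expn'} appears only afterwards, as a remark.

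What your route buys:
\begin{itemize}
\item It establishes \eqref{eq:gwn_prod_expn'} first, so it holds verbatim over any two-letter alphabet and in any shuffle-compatible realization (regularized or motivic iterated integrals).
\item It gives the result in one stroke, with no induction.
\item It explains the alternating signs as inclusion--exclusion on the terminal run of ones.
\item Reading the word identity off the paper's functional identity implicitly relies on linear independence of the GPLs involved; your direction avoids that.
\end{itemize}
The paper's route, in turn, needs only the recursion \eqref{eq:GPL_rec}. It also fits the uniform ``equation of motion plus initial data'' template used for Propositions \ref{prop:G_avg_desc} and \ref{prop:G_alt_avg_desc} and for \eqref{eq:gwn_Landen}.

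One small correction to your sanity check. The case $n=2$ of your identity reads $1\shf\boldsymbol0_{w-2}1=\boldsymbol0_{w-2}11+\sum\alpha_1\cdots\alpha_{w-1}1$. This is \eqref{eq:gw2_prod'} with $\boldsymbol0_{w-1}$ replaced by $\boldsymbol0_{w-2}$; as printed in the paper, that display has mismatched weights.
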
\begin{proof}For $ n=1$,  the proposed formula follows from the definition of $ g_{w,1}(z)$.  For $n=w$, we have \begin{align}
g_{w,w}(z)=G(\boldsymbol1_{w};z)=\frac{\log^w(1-z)}{w!}
\end{align}by  iterative invocations of either the GPL recursion  \eqref{eq:GPL_rec} or the GPL shuffles   \eqref{eq:GPL_shuffle} for $ 1\shf\textbf{1}_n=(n+1)\boldsymbol1_{n+1}$, whereas the right-hand side of \eqref{eq:gwn_prod_expn} becomes \begin{align}
\sum_{m=1}^w(-1)^{m-1}G(\boldsymbol1_{ w-m};z)G(\boldsymbol1_{m};z)=\sum_{m=1}^w(-1)^{m-1}\frac{\log^w(1-z)}{(w-m)!m!}=\frac{\log^w(1-z)}{w!},
\end{align}according to the binomial theorem.

 In the light of \begin{align}
\frac{\partial G(\boldsymbol1_{n-m};z)}{\partial z}={}&\begin{cases}0 & \text{if }n-m=0, \\
\frac{ G(\boldsymbol1_{n-m-1};z)}{z-1} & \text{if }n-m>0, \\
\end{cases}\intertext{and}\frac{\partial G(\boldsymbol0_{w-n},\boldsymbol1_{m};z)}{\partial z}={}&\begin{cases}\frac{G(\boldsymbol1_{m-1};z)}{z-1} & \text{if }w-n=0, \\
\frac{ G(\boldsymbol0_{w-n-1},\boldsymbol1_{m};z)}{z} & \text{if }w-n>0, \\
\end{cases}
\end{align}we see that\begin{align}
\delta_{w,n}(z)\colonequals g_{w,n}(z)-\sum_{m=1}^n(-1)^{m-1}G(\boldsymbol1_{n-m};z)G(\boldsymbol0_{w-n},\boldsymbol1_{m};z)
\end{align}evolves in the same fashion as $  g_{w,n}(z)$:\begin{align}
\frac{\partial \delta_{w,n}(z)}{\partial z}=\frac{ \delta_{w-1,n}(z)}{z}+\frac{\delta_{w-1,n-1}(z)}{z-1},\quad\text{where }w\in\mathbb Z_{>0},n\in\mathbb Z\cap[1,w],
\end{align}so long as one adopts the convention that $ \delta_{w-1,w}(z)=0$. Since $ \delta_{w,n}(0)=0$, we may evaluate\begin{align}
 \delta_{w,n}(z)=\int_{0}^z\left[ \frac{ \delta_{w-1,n}(x)}{x}+\frac{\delta_{w-1,n-1}(x)}{x-1} \right]\D x\label{eq:delta_int_evolv}
\end{align} iteratively,  as follows:\begin{itemize}
\item
Using $ \delta_{2,1}(x)=\delta_{2,2}(x)=0$ proved in the last paragraph, deduce $ \delta_{3,2}(z)=0$.\item Fixing the depth $n=2$, prove $ \delta_{w,2}(z)=0$  by induction on the weight  $w$, through repeated invocations of \eqref{eq:delta_int_evolv}. [Here, one does not need to know the shuffle identity \eqref{eq:gw2_prod} beforehand.]\item Using $ \delta_{3,3}(x)=0$ proved in the last paragraph, deduce $ \delta_{4,3}(z)=0$, and then promote to all the weights $ w>4$ while fixing the depth  $n=3$.\item Proceed as above, advancing the depth $n$ by one step at a time, before validating $ \delta_{w,n}(z)=0$ for all $ w\geq n$.
\end{itemize} This completes the proof of  \eqref{eq:gwn_prod_expn}.
\end{proof}\begin{remark}In terms of word shuffles, the last lemma converts a multi-index summation over certain binary words  into a single sum over $ m\in\mathbb Z\cap[1,n]$:\begin{align}
\sum_{\substack{\alpha_1,\dots,\alpha_{w-1}\in\{0,1\}\\\alpha_1+\dots+\alpha_{w-1}+1=n}}\alpha_1\cdots\alpha_{w-1}1= \sum_{m=1}^n(-1)^{m-1}\boldsymbol1_{n-m}\shf\boldsymbol0_{w-n}\boldsymbol1_{m},\tag{\ref{eq:gwn_prod_expn}$'$}\label{eq:gwn_prod_expn'}
\end{align}so long as  the empty words $ \boldsymbol0_0$ and $ \boldsymbol1_0$  are suitably interpreted.
\eor\end{remark}\begin{remark}During the proof of Proposition \ref{prop:G_avg_desc}, we encountered a quantity\begin{align}
\Reg_{z\to0}\mathsf G_w(1-z)=\Reg_{z\to0} \sum _{n=1}^wg_{w,n}(1-z),
\end{align} where the operation    ``$ \Reg_{z\to0}$'' was meant to suppress all the positive integer powers of $ \log z$. With \eqref{eq:gwn_prod_expn'}, we have\begin{align}
\begin{split}\Reg_{z\to0} g_{w,n}(1-z)={}&\Reg_{z\to0}\sum_{m=1}^n(-1)^{m-1}G(\boldsymbol1_{n-m};1-z)G(\boldsymbol0_{w-n},\boldsymbol1_{m};1-z)\\={}&(-1)^{n-1}G(\boldsymbol0_{w-n},\boldsymbol1_{n};1)
\end{split}
\end{align}for $ n\in\mathbb Z\cap[1,w-1]$ and $ \Reg_{z\to0}g_{w,w}(1-z)=\frac{1}{w!}\Reg_{z\to0}\log^w z=0$, so \begin{align}
\begin{split}\Reg_{z\to0}\mathsf G_w(1-z)={}&\sum_{n=1}^{w-1}(-1)^{n-1}G( \boldsymbol0_{w-n},\boldsymbol1_{n};1)=-\sum_{n=1}^{w-1}\Li_{w-n+1,\boldsymbol1_{n-1}}(\boldsymbol1_{n})\\\underset{\text{\cite[(5)]{BorweinStraub2015Snp}}}{\xlongequal{\text{\cite[(1.3)]{Koelbig1986}}}}{}&-\sum_{n=1}^{w-1}\frac{(-1)^{w-1}}{(w-n-1)!n!}\int_0^1\frac{\log^{w-n-1}t\log^{n}(1-t)\D t}{t}\\={}&\frac{(-1)^w}{(w-1)!}\int_0^1\frac{[\log t+\log(1-t)]^{w-1}-\log^{w-1}t}{t}\D t\\\in{}&\mathfrak Z_{w}(1)\cap\mathbb Q[\pi^2,\zeta_{3},\zeta_{5},\dots]
\end{split}\label{eq:RegGw(1-z)}
\end{align} gives a quantitative refinement of the statement in \eqref{eq:RegG(1-z)}, where  the ``$ \in$'' step follows from a standard argument \cite[\S9.1]{Koelbig1986}.
\eor\end{remark}

To facilitate further discussions, we rewrite the shuffle identities \eqref{eq:gwn_prod_expn} and \eqref{eq:gwn_prod_expn'}
with the  integral representations of Nielsen polylogarithms, as studied by K\"olbig \cite[(1.3)]{Koelbig1986} and Borwein--Straub \cite[(5)]{BorweinStraub2015Snp}. \begin{proposition}
For $ |z|\leq1$, $ z\neq1$, $ w\in\mathbb Z_{>0}$ and $ n\in\mathbb Z\cap[1,w-1]$, we have\begin{align}
g_{w,n}(z)=\frac{(-1)^w}{(w-n-1)!n!}\int_0^1 \frac{[\log^{}(1-zt)-\log(1-z)]^{n}-[-\log(1-z)]^n}{t}\log^{w-n-1}t\D t.\label{eq:gwn_int}
\end{align} \end{proposition}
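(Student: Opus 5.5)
The plan is to start from the shuffle expansion \eqref{eq:gwn_prod_expn} of the preceding lemma and replace each factor by a closed form. The hypothesis $n\le w-1$ matters because it makes the block of zeros $\boldsymbol0_{w-n}$ in every second factor nonempty. Each second factor is then a genuine Nielsen polylogarithm, with no pure-logarithm degeneracy.

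\emph{First factors.} For these I will use $G(\boldsymbol1_{k};z)=\frac{\log^k(1-z)}{k!}$, as established inside the proof of the lemma.

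\emph{Second factors.} By \eqref{eq:LtoGPL} with $a_1=w-n+1$ and $a_2=\dots=a_m=1$,
\begin{align*}
G(\boldsymbol0_{w-n},\boldsymbol1_m;z)=(-1)^m\Li_{w-n+1,\boldsymbol1_{m-1}}(z,\boldsymbol1_{m-1}).
\end{align*}
The right-hand side is the Nielsen polylogarithm $S_{w-n,m}(z)$. Its integral representation, in K\"olbig's \cite[(1.3)]{Koelbig1986} or Borwein--Straub's \cite[(5)]{BorweinStraub2015Snp} form already used in \eqref{eq:RegGw(1-z)}, reads
\begin{align*}
\Li_{a+1,\boldsymbol1_{m-1}}(z,\boldsymbol1_{m-1})=\frac{(-1)^{a+m-1}}{(a-1)!\,m!}\int_0^1\frac{\log^{a-1}t\,\log^{m}(1-zt)}{t}\D t,\qquad a\in\mathbb Z_{>0},
\end{align*}
and here $a=w-n$.

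I intend to justify this representation for $|z|\le1$, $z\neq1$ by a short argument rather than merely citing it. One option is to differentiate both sides in $z$ and compare with the GPL recursion \eqref{eq:GPL_rec}; this works because $\partial_z\log^m(1-zt)$ produces the factor $t/(zt-1)$, and both sides vanish at $z=0$. The other option is to expand $\log^m(1-zt)$ in powers of $zt$ and integrate termwise. In either case one extends to the boundary $|z|=1$, $z\neq1$ by Abel's theorem or dominated convergence. The only integrability issue is at $t=0$, where $\log^m(1-zt)=O(t^m)$ cancels the $1/t$.

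Substituting both closed forms into \eqref{eq:gwn_prod_expn} leaves a sum over $m\in\mathbb Z\cap[1,n]$ of integrals with a common prefactor. The bookkeeping of signs is the one step needing care. The sign collected by the $m$-th term is
\begin{align*}
(-1)^{m-1}\cdot(-1)^m\cdot(-1)^{w-n+m-1}=(-1)^w(-1)^{n-m}.
\end{align*}
Combined with $\frac{1}{(n-m)!\,m!}=\frac{1}{n!}\binom nm$, the integrand becomes
\begin{align*}
\frac{(-1)^w}{(w-n-1)!\,n!}\cdot\frac{\log^{w-n-1}t}{t}\sum_{m=1}^n\binom nm\log^m(1-zt)\,[-\log(1-z)]^{n-m}.
\end{align*}

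Finally, by the binomial theorem this sum is
\begin{align*}
[\log(1-zt)-\log(1-z)]^n-[-\log(1-z)]^n,
\end{align*}
the subtracted term being exactly the missing $m=0$ summand. This yields \eqref{eq:gwn_int}. The subtraction also keeps the integrand $O(t)$ near $t=0$, so the combined integral converges, consistent with splitting it into finitely many convergent pieces.

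The main obstacle I anticipate is not conceptual but consists of getting the sign $(-1)^{a+m-1}$ in the Nielsen representation exactly right, and of ensuring the branch of $\log(1-zt)$ is the principal one for $|z|\le1$. I would sanity-check the signs against the case $n=1$, where \eqref{eq:gwn_int} must reduce to $g_{w,1}(z)=-\Li_w(z)=\frac{(-1)^w}{(w-2)!}\int_0^1\frac{\log(1-zt)\log^{w-2}t}{t}\D t$. This is the classical representation of $\Li_w$, which can be verified by expanding $\log(1-zt)$ in powers of $zt$.
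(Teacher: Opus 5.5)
Your proposal is correct and follows the paper's proof essentially verbatim. You substitute $G(\boldsymbol1_{n-m};z)=\log^{n-m}(1-z)/(n-m)!$ and the K\"olbig/Borwein--Straub integral for $\Li_{w-n+1,\boldsymbol1_{m-1}}(z,\boldsymbol1_{m-1})$ into the shuffle expansion \eqref{eq:gwn_prod_expn}, your sign $(-1)^{w}(-1)^{n-m}$ agrees with the paper's $(-1)^{w-n+m}$, and the binomial theorem then closes the sum; your self-contained check of the Nielsen representation and its extension to $|z|=1$, $z\neq1$ is a harmless addition where the paper simply cites it.
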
\begin{proof}We compute \begin{align}
\begin{split}g_{w,n}(z)={}&-\sum_{m=1}^n\frac{\log^{n-m}(1-z)}{(n-m)!}\Li_{w-n+1,\boldsymbol1_{m-1}}(z,\boldsymbol1_{m-1})\\\underset{\text{\cite[(5)]{BorweinStraub2015Snp}}}{\xlongequal{\text{\cite[(1.3)]{Koelbig1986}}}}{}&\frac{1}{(w-n-1)!}\sum_{m=1}^n\frac{(-1)^{w-n+m}\log^{n-m}(1-z)}{(n-m)!m!}\int_0^1\frac{\log^{w-n-1}t\log^{m}(1-zt)\D t}{t}.
\end{split}
\end{align}  The binomial theorem then leads us to  \eqref{eq:gwn_int}.
\end{proof}
\begin{remark}The integral representations for Nielsen polylogarithms do not work when $n=w$. Fortunately,  direct computations reveal these exceptional cases as simply $ g_{w,w}(z)=\frac{\log^w(1-z)}{w!}$ for all $ w\in\mathbb Z_{>0}$.
\eor\end{remark}

Consider the hypergeometric  series\begin{align}
_2F_1\left( \left.\begin{array}{@{}c@{}}
a,b \\
c \\
\end{array}\right|z \right)=1+\sum_{k=1}^\infty\frac{(a)_k(b)_k}{(c)_k}\frac{z^k}{k!},\quad|z|\leq 1,z\neq1, \label{eq:2F1_defn}
\end{align}
where $ (A)_{k}=\prod_{m=0}^{k-1}(A+m)$ is the rising factorial. In the next proposition, we turn the right-hand side of    \eqref{eq:gwn_int} into partial derivatives of  $ _2F_1$   series with respect to their  hypergeometric parameters.\begin{proposition}
For $ |z|\leq1$, $ \left\vert \frac{z}{z-1} \right\vert\leq1$, $ z\neq1$, and $ m+1,n\in\mathbb Z_{>0}$, we have\begin{subequations}\begin{align}
g_{m+n,n}(z)={}&\frac1{m!n!}\left.\!\frac{\partial^{m+n}}{\partial u^{m}\partial v^n}\left[ (1-z)^{v}{_2}F_1\left(\begin{array}{@{}c@{}}
-u,v \\
1-u \\
\end{array}\middle| z\right) -1\right]\right|_{u=v=0}\label{eq:gwn_dd_2F1}\\={}&\frac1{m!n!}\left.\!\frac{\partial^{m+n}}{\partial u^{m}\partial v^n}\left[ {_2}F_1\left(\begin{array}{@{}c@{}}
1,v \\
1-u \\
\end{array}\middle| \frac{z}{z-1}\right) -1\right]\right|_{u=v=0}.\label{eq:gwn_dd_2F1Pfaff}
\end{align}\end{subequations}\end{proposition}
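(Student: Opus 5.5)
Our plan is to build a two-variable generating function for \eqref{eq:gwn_int} and compare it with an Euler integral for ${}_2F_1$. For $m\ge1$ (so $w=m+n>n$), multiply \eqref{eq:gwn_int} by $u^m v^n$ and sum over $m\in\mathbb Z_{>0}$, $n\in\mathbb Z_{>0}$. Since $(-1)^w=(-1)^{m}(-1)^{n}$, the powers of $\log t$ and of $\log(1-zt)-\log(1-z)$ exponentiate:
\begin{align*}
\sum_{m\ge1}\frac{(-u)^{m}\log^{m-1}t}{(m-1)!}=-u\,t^{-u},\qquad \sum_{n\ge0}\frac{(-v)^n[\log(1-zt)-\log(1-z)]^n}{n!}=\Big(\frac{1-zt}{1-z}\Big)^{-v}.
\end{align*}
The generating function of the right-hand side of \eqref{eq:gwn_int} is therefore
\begin{align*}
-u\int_0^1 t^{-u-1}\left[\Big(\frac{1-zt}{1-z}\Big)^{-v}-(1-z)^{v}\right]\D t .
\end{align*}
The subtraction of $(1-z)^v$ makes the integrand $O(t^{-u})$ near $t=0$, so for small $u$ with $\R u<1$ the integral converges. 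It equals $(1-z)^v\bigl[-u\int_0^1 t^{-u-1}\bigl((1-zt)^{-v}-1\bigr)\D t\bigr]$. Expanding $(1-zt)^{-v}-1=\sum_{k\ge1}(v)_k z^kt^k/k!$ and integrating term by term gives $-u/(k-u)=(-u)_k/(1-u)_k$. Hence the generating function is exactly $(1-z)^v\bigl[{}_2F_1(-u,v;1-u;z)-1\bigr]$.

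The terms with $m=0$ (i.e.\ $n=w$) are handled separately. By the remark following \eqref{eq:gwn_int}, $g_{n,n}(z)=\log^n(1-z)/n!$. Summing, $\sum_{n\ge1}g_{n,n}v^n=(1-z)^v-1$, which is precisely the $u$-free part of $(1-z)^v{}_2F_1(-u,v;1-u;z)-1$, because ${}_2F_1(0,v;1;z)=1$. Combining the two pieces, $(1-z)^v{}_2F_1(-u,v;1-u;z)-1=\sum_{m\ge0,n\ge1}g_{m+n,n}(z)u^mv^n$. The terms with $n=0$ vanish, since at $v=0$ the expression is $0$. Extracting Taylor coefficients then yields \eqref{eq:gwn_dd_2F1}.

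For \eqref{eq:gwn_dd_2F1Pfaff}, apply the Pfaff transformation ${}_2F_1(a,b;c;z)=(1-z)^{-b}{}_2F_1(c-a,b;c;z/(z-1))$ with $a=-u$, $b=v$, $c=1-u$, so that $c-a=1$. This gives $(1-z)^v{}_2F_1(-u,v;1-u;z)={}_2F_1(1,v;1-u;z/(z-1))$, and the second formula follows.

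The main obstacle is analytic bookkeeping. We need the interchange of sums and integrals and analyticity of both sides in $(u,v)$ near $0$, uniformly in $z$ on the stated region. The boundary cases $|z|=1$ and $|z/(z-1)|=1$ require Abel-type continuity, since the series converge only conditionally there. The Pfaff identity also requires both arguments in the closed disc minus $1$, which is exactly the stated hypothesis. Our plan is to prove everything first for $|z|<1/2$, where all series converge absolutely and uniformly. We then extend by analytic continuation in $z$, and pass to the boundary by continuity, using that both sides of \eqref{eq:gwn_int} are continuous up to the boundary away from $z=1$.
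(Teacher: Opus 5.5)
Your proof is correct and follows essentially the same route as the paper. You sum \eqref{eq:gwn_int} into the generating function $-u\int_0^1 t^{-u-1}\bigl[\bigl(\frac{1-zt}{1-z}\bigr)^{-v}-(1-z)^v\bigr]\D t$, add the diagonal terms $g_{n,n}(z)=\log^n(1-z)/n!$ separately, recognize $(1-z)^v{}_2F_1(-u,v;1-u;z)-1$, and finish with Pfaff's transformation. The one small difference is that you evaluate the integral by termwise expansion ($-u/(k-u)=(-u)_k/(1-u)_k$) instead of quoting Euler's integral representation, which lets you work directly for $\R u<1$ rather than for $\R u<0$ followed by analytic continuation in $u$.
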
\begin{proof}With Euler's integral representation for the hypergeometric series  $ _2F_1$ \cite[Theorem 2.2.1]{AAR}, we have\begin{align}
\begin{split}\sum _{m=0}^\infty \sum _{n=1}^\infty g_{m+n,n}(z)u^mv^n={}&\sum _{n=1}^\infty \frac{\log^n(1-z)}{n!}v^n+u\int_0^1 \frac{(1-z)^{v}-\left(\frac{1-z}{1-zt}\right)^v}{t^{u+1}}\D t\\={}&[(1-z)^{v}-1]+(1-z)^{v}\left[{_2}F_1\left(\begin{array}{@{}c@{}}
-u,v \\
1-u \\
\end{array}\middle| z\right) -1 \right]\\={}&(1-z)^{v}{_2}F_1\left(\begin{array}{@{}c@{}}
-u,v \\
1-u \\
\end{array}\middle| z\right)-1
\end{split}
\end{align}for sufficiently small $|u| $ and $|v|$, so  \eqref{eq:gwn_dd_2F1} follows immediately.

Pfaff's transformation \cite[(2.2.6)]{AAR}  brings us\begin{align}
{_2}F_1\left(\begin{array}{@{}c@{}}
v,-u \\
1-u \\
\end{array}\middle| z\right)=(1-z)^{-v}{_2}F_1\left(\begin{array}{@{}c@{}}
v,1 \\
1-u \\
\end{array}\middle| \frac{z}{z-1}\right),
\end{align} which explains \eqref{eq:gwn_dd_2F1Pfaff}.
\end{proof}

\subsection{Au's Galois descents from ``level 6'' to level 1\label{subsec:desc3}}
Now we embark on the proof of  Au's  sum rule in \eqref{eq:Au1.5b}.\footnote{As indicated in Footnote \ref{fn:desc3}, for each given weight $ w$ and level $n$, this sum rule represents a Galois descent from  $ \mathfrak Z_w(3)$ [in the guise of $ \mathfrak Z_w(6)$, hence the presence of quotation marks in the title of this subsection] to   $ \mathfrak Z_w(1)$. }  Towards this end, we unpack\begin{align}\R
g_{m+n,n}(\varrho)=\left.\!\frac1{m!n!}\R\frac{\partial^{m+n}}{\partial u^{m}\partial v^n}\left[ {_2}F_1\left(\begin{array}{@{}c@{}}
1,v \\
1-u \\
\end{array}\middle| \frac{1}{\varrho}\right) -1\right]\right|_{u=v=0}
\end{align}into Euler's integral representation \cite[Theorem 2.2.1]{AAR}
as follows:\begin{align}
\begin{split}\R
g_{m+n,n}(\varrho)={}&\frac1{m!n!}\left.\!\R\frac{\partial^{m+n}}{\partial u^{m}\partial v^n}\left[ \frac{\Gamma (1-u) }{\Gamma (v) \Gamma (1-u-v)} \int_0^1 \frac{t^{v-1} (1-t)^{-u-v}}{1-\frac{t }{\varrho}} \D t-1\right]\right|_{u=v=0}\\={}&\frac1{m!n!}\left.\!\R\frac{\partial^{m+n}}{\partial u^{m}\partial v^n}\left[ \frac{1 }{\mathrm B(1-u-v,v)} \int_0^1 \frac{t^{v-1} (1-t)^{-u-v}}{1-\frac{t }{\varrho}} \D t-1\right]\right|_{u=v=0},
\end{split}
\end{align}where $ \mathrm B(\cdot,\cdot)$ 
and $ \Gamma(\cdot)$ are Euler's beta and gamma functions. Now that\begin{align}
\left.\!\frac{\partial^{m+n}}{\partial u^{m}\partial v^n}\left[ \frac{1 }{\mathrm B(1-u-v,v)} -v\right]\right|_{u=v=0}\in \mathfrak Z_{m+n-1}(1)
\end{align}is expressible through the Riemann zeta values $ \zeta_{w}\equiv\Li_w(1)$ for $ w\in\mathbb Z_{>1}$ (cf.\ \cite[(2.17)]{Xu2017}), it will suffice to show that \begin{align}
\left.\!\R\frac{\partial^{m+n}}{\partial u^{m}\partial v^n}\int_0^1 \frac{t^{v-1} (1-t)^{-u-v}}{1-\frac{t }{\varrho}} \D t\right|_{u=v=0}
\end{align}are representable through MZVs for $ m,n\in\mathbb Z_{>0}$. Furthermore, since\begin{align}
\R\frac{1}{\left(1-\frac{t }{\varrho} \right)t}=\frac{1}{t}-\frac{1}{2}\left( \frac{1}{t-\varrho} +\frac{1}{t-\frac{1}{\varrho}}\right),
\end{align}and\begin{align}
\int_0^1\frac{\log^{a} t \log^{b}(1-t)}{t}\D t\in\mathfrak Z_{a+b+1}(1) 
\end{align}is a consequence of GPL recursion \eqref{eq:GPL_rec}  and GPL shuffles \eqref{eq:GPL_shuffle} for all $a,b\in\mathbb Z_{>0}$, we may focus on the rational function\begin{align}
\frac{1}{t-\varrho} +\frac{1}{t-\frac{1}{\varrho}}=\frac{2t-1}{1-t(1-t)}
\end{align}whose poles are two vertices of a hexagon inscribed in the unit circle.\begin{theorem}\label{thm:1.5b}
We have \begin{align}
\int_0^1\left( \frac{1}{t-\varrho} +\frac{1}{t-\frac{1}{\varrho}} \right)\log^{a} t \log^{b}(1-t)\D t\in\mathfrak Z_{a+b+1}(1)\label{eq:desc3int}
\end{align}for all $a,b\in\mathbb Z_{>0}$. As a result, Au's relation $\R g_{w,n}(\varrho)\in\mathfrak Z_w(1)$ holds for all $ w\in\mathbb Z_{>0}$ and $ n\in\mathbb Z\cap[1,w]$.\end{theorem}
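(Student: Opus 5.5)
The plan is to prove \eqref{eq:desc3int} by induction on the weight $w=a+b+1$. The induction should run inside an enlarged family of integrals that is closed under the operations we will apply. Once \eqref{eq:desc3int} is known, the consequence for $\R g_{w,n}(\varrho)$ follows from the reductions carried out just before the theorem. Those reductions used \eqref{eq:gwn_dd_2F1Pfaff}, Euler's integral, the fact that the derivatives of $1/\mathrm B(1-u-v,v)$ are zeta values, and the partial fraction for $\R[(1-t/\varrho)t]^{-1}$. Differentiating $t^{v-1}(1-t)^{-u-v}$ at $u=v=0$ produces exactly the integrals $\int_0^1(\frac1t-\frac12(\frac{1}{t-\varrho}+\frac{1}{t-1/\varrho}))\log^a t\log^b(1-t)\,\D t$ with $a,b\ge0$. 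The cases $a=0$ or $b=0$ reduce to classical polylogarithms at $\varrho^{\pm1}$, whose real parts are rational multiples of $\zeta$ values, as in the proof of Proposition \ref{prop:G_alt_avg_desc}. The case $n=w$ is $\R\frac{\log^w(1-\varrho)}{w!}=0$, since $\log(1-\varrho)=-\pi i/3$ is purely imaginary and $w=n$ must then be handled directly (the real part of $(-\pi i/3)^w/w!$ is a rational multiple of $\pi^w$ for even $w$ and zero for odd $w$).

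For the main step, write $R(t)=\frac{2t-1}{1-t+t^2}=\frac{\D}{\D t}\log(1-t+t^2)$. Since $a,b\ge1$ and $\log(1-t+t^2)$ vanishes at both $t=0$ and $t=1$, I integrate by parts with no boundary terms. This gives
\begin{align*}
-\int_0^1\log(1-t+t^2)\left[\frac{a\log^{a-1}t\log^b(1-t)}{t}-\frac{b\log^a t\log^{b-1}(1-t)}{1-t}\right]\D t .
\end{align*}
Under $t\mapsto1-t$ the factor $1-t+t^2$ is invariant, so the second term is the first with $(a,b)$ swapped. Everything is therefore reduced to
\begin{align*}
J(a,b)\colonequals\int_0^1\log(1-t+t^2)\log^{a}t\log^b(1-t)\frac{\D t}{t}.
\end{align*}
In GPL language, $J(a,b)$ is a $\mathbb Q$-combination of $G(\dots;1)$ with letters in $\{0,1,\varrho,\bar\varrho\}$, in which the letters $\varrho,\bar\varrho$ occur exactly once, and symmetrically (via $\log(1-t+t^2)=G(\varrho;t)+G(\bar\varrho;t)$). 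This uses the shuffle product \eqref{eq:GPL_shuffle} and the fact that $\log^a t\log^b(1-t)$ is a shuffle of words in $0,1$.

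The key structural input is that $\varrho$ and $\bar\varrho$ are the two fixed points of the order-three Möbius map $z\mapsto 1/(1-z)$, which cyclically permutes $0,1,\infty$. Equivalently, $1-t+t^2=(1+t^3)/(1+t)$. I would first try the elementary route. Expand $\log(1-t+t^2)=-\sum_{k\ge1}\frac{t^k(1-t)^k}{k}$, so that $J(a,b)$ becomes $-\sum_k\frac1k\,\partial_x^a\partial_y^b\mathrm B(k+x,k+1+y)|_{x=y=0}$. These are central-binomial sums of the type $\sum_k\frac{(\text{harmonic products})}{k\binom{2k}{k}}$. I would then attempt to show that the full generating function $\sum_k\frac1k\mathrm B(k+x,k+1+y)$ equals a ${}_3F_2$ at argument $\frac14$ which, by a quadratic transformation, is expressible through $\Gamma$-ratios and ${}_2F_1$'s at $z=1$ or $z=-1$. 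For the specific symmetric combination this would make all Taylor coefficients MZVs. This is the same mechanism behind $\sum_k\frac{1}{k^2\binom{2k}{k}}=\frac{\pi^2}{18}$.

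If the hypergeometric route stalls, the fallback is motivic. I would apply Glanois' descent criterion for level $N=6$, or level $3$, as cited in the introduction. This means computing Goncharov's coaction on the motivic lift of $J(a,b)$. Because $\varrho,\bar\varrho$ appear only once, the infinitesimal coaction $D_r$ produces either pure $\{0,1\}$ words, which are MZVs, or lower-weight integrals of the same family $J(a',b')$ and $\int_0^1 R(t)\log^{a'}t\log^{b'}(1-t)\,\D t$, together with left factors of the form $\log(1-\varrho)$ or $\Li_r(\varrho^{\pm1})$ combined symmetrically. By induction the lower-weight pieces descend. The task is then to show that the $\varrho$-contributions cancel in the $\varrho\leftrightarrow\bar\varrho$-symmetric sum, using the symmetry $z\mapsto1/(1-z)$, after which the criterion gives descent to level $1$. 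I expect this cancellation, that is, controlling the period part that the coaction does not see (the multiple of $\zeta_w$ or $(2\pi i)^w$), to be the main obstacle. To handle it, I would pin down that constant numerically or via the $a=0$ or $b=0$ boundary cases, which are explicitly classical.
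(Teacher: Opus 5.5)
Your reduction is correct, and once the two halves of your integration by parts are recombined it lands on exactly the paper's series. With $R(t)=\frac{2t-1}{1-t+t^2}$ one has $\int_0^1R(t)\log^at\log^b(1-t)\,\D t=bJ(b-1,a)-aJ(a-1,b)$, whose generating function is
\begin{align*}
\int_0^1R(t)\,t^x(1-t)^y\,\D t=\sum_{k\geq1}\frac{x\,\mathrm B(k+x,k+1+y)-y\,\mathrm B(k+y,k+1+x)}{k}=(x-y)\sum_{k\geq1}\frac{\Gamma(k+x)\Gamma(k+y)}{\Gamma(2k+1+x+y)}.
\end{align*}
With $x=X+\varepsilon$ and $y=X-\varepsilon$, this is precisely the generating function $\varepsilon\sum_{n\geq1}\Gamma(n+X-\varepsilon)\Gamma(n+X+\varepsilon)/[(n+X)\Gamma(2n+2X)]$ of the paper's $\mathscr I_{a,b}$. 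But the theorem \emph{is} the claim that the Taylor coefficients of this series lie in $\mathfrak Z(1)$, and you only announce that step (``I would attempt to show\dots''). No quadratic transformation is exhibited. The analogy with $\sum_k1/(k^2\binom{2k}{k})=\pi^2/18$ does not carry over by itself, since already $\sum_k1/(k^3\binom{2k}{k})=\frac{2\pi}{3}\I\Li_2(\varrho)-\frac43\zeta_3$. The paper supplies the missing idea: a Wilf--Zeilberger pair, an $\varepsilon$-shifted version of one due to Hou--Sun. It converts the central-binomial sum over $n$ into a sum over $k$ whose parameter derivatives are zeta values times harmonic sums $\sum_k(k+1)^{-1-s}\prod_j\mathsf H_k^{(r_j)}$, and these are known to be MZVs.

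There is also a concrete flaw in how you aim the argument. You say everything reduces to $J(a,b)$ and propose to evaluate the full generating function $\sum_k\frac1k\mathrm B(k+x,k+1+y)$. However, the individual $J(a,b)$ do not descend. Writing $\log(1-t+t^2)=2\R\log(1-\varrho t)$ gives $J(0,1)=2\R[\Li_3(\varrho)+\Li_{2,1}(\varrho,1)]=2\zeta_3-\frac{2\pi}{3}\I\Li_2(\varrho)$, which is not expected to lie in $\mathfrak Z_3(1)=\mathbb Q\zeta_3$. So no transformation can make all Taylor coefficients of that generating function MZVs. Only the antisymmetrized combination $bJ(b-1,a)-aJ(a-1,b)$, i.e.\ the series displayed above, can descend, and any hypergeometric argument must be organized around it. Your motivic fallback has the same defect, because its induction hypothesis sits on the family $J(a',b')$, which is not closed under descent. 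Moreover, the cancellation of the $\varrho$-contributions in the coaction, which is the whole content of that route, is asserted rather than computed. A minor slip: for $n=w$ you first say $\R\frac{\log^w(1-\varrho)}{w!}=0$ and then, correctly, that it is a rational multiple of $\pi^w$ for even $w$. Either way it lies in $\mathfrak Z_w(1)$.
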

\begin{proof}We only need to work out \begin{align}
   \mathscr I_{a,b}\colonequals\int_0^1\left( \frac{1}{t-\varrho} +\frac{1}{t-\frac{1}{\varrho}} \right)\log^{a} \frac{t}{1-t} \log^{b}(t(1-t))\D t\in\mathfrak Z_{a+b+1}(1)\tag{\ref{eq:desc3int}$'$}\label{eq:desc3int'} 
\end{align}for $ a,b\in\mathbb Z_{\geq0}$. In view of the relation\begin{align}
\left( \frac{1}{t-\varrho} +\frac{1}{t-\frac{1}{\varrho}} \right)\log^{b}(t(1-t))=b!\frac{2t-1}{1-t(1-t)}G(\boldsymbol0_{b};t(1-t))=b!\frac{\partial}{\partial t}G(1,\boldsymbol0_{b};t(1-t)),
\end{align} we may integrate by parts, and deduce the following series representation for $ \mathscr I_{a,b}$:\begin{align}
\begin{split}\mathscr I_{a,b}={}&-b!a\int_{0}^1\frac{G(1,\boldsymbol0_{b};t(1-t))}{t(1-t)}\log^{a-1} \frac{t}{1-t}\D t\\\xlongequal{\text{\cite[\S2.2]{SunZhou2026MCV}}}{}& a\int_{0}^1\sum_{k=1}^\infty\left.\!\frac{\partial^{b}}{\partial x^{b}}\frac{[t(1-t)]^{x-1}}{x}\right|_{x=k}\log^{a-1} \frac{t}{1-t}\D t\\={}& a\sum_{n=1}^\infty\int_{0}^1\left.\!\frac{\partial^{a+b-1}}{\partial \varepsilon^{a-1}\partial x^{b}}\frac{t^{x+\varepsilon}(1-t)^{x-\varepsilon}}{x}\right|_{x=n,\varepsilon=0}\frac{\D t}{t(1-t)}\\={}& a\sum_{n=1}^{\infty}\left.\!\frac{\partial^{a+b-1}}{\partial \varepsilon^{a-1}\partial x^{b}}\frac{\Gamma (x-\varepsilon) \Gamma (x+\varepsilon )}{x \Gamma (2 x)}\right|_{x=n,\varepsilon=0}.
\end{split}
\end{align} It is worth noting that when $a$ is even, all the summands vanish identically, and one has $ \mathscr I_{a,b}=0$. This is also anticipated from  the integrand of \eqref{eq:desc3int'}, which is an odd function of $2t-1$ if $a $ is even. 

In the rest of this proof, we  will focus on the series representation of  $ \mathscr I_{a,b}$ for odd  $a$, and show that it belongs to $ \mathfrak Z_{a+b+1}(1)$, using Wilf--Zeilberger (WZ) pairs \cite{WZ1990}. Thanks to Hou--Sun  (cf.\ \cite[Theorem 1.3]{HouSun2026}), we may pick a WZ pair\footnote{Here, we write $ x!\equiv\Gamma(x+1)$ for $ x>-1$. The WZ pairs listed here are just  $ \varepsilon$-shifted versions of \cite[Theorem 1.3]{HouSun2026}. }\begin{align}
\mathscr F_{\varepsilon}(n,k)\colonequals{}&\frac{2 \left(n-\frac{\varepsilon }{2}\right)! \left(k+n+\frac{\varepsilon }{2}\right)!}{3 (2 k+2 n+\varepsilon+2) (k+2 n+1)!} ,\\\mathscr G_\varepsilon(n,k)\colonequals{}& \frac{\frac{\left(n-\frac{\varepsilon }{2}+1\right)^2}{k+2 n+2}+k+n+\frac{\varepsilon }{2}+1}{n-\frac{\varepsilon }{2}+1}\mathscr F_{\varepsilon}(n,k),
\end{align}satisfying\begin{align}
\mathscr F_{\varepsilon}(n+1,k)-\mathscr F_{\varepsilon}(n,k)=\mathscr G_\varepsilon(n,k+1)-\mathscr G_\varepsilon(n,k)
\end{align}and \begin{align}
\sum_{n=0}^\infty\left.\!\frac{\partial^{a+b-1}\mathscr G_\varepsilon(x,0)}{\partial \varepsilon^{a-1}\partial x^{b}}\right|_{x=n}=\sum_{k=0}^\infty\left.\!\frac{\partial^{a+b-1}\mathscr F_\varepsilon(x,k)}{\partial \varepsilon^{a-1}\partial x^{b}}\right|_{x=0}.
\end{align}This effectively turns $ \mathscr I_{a,b}$ into a member in the $ \mathbb Q$-vector space\begin{align}
\Span_{\mathbb Q}\left\{ Z_m\sum_{k=0}^\infty\frac{1}{(k+1)^{1+s}}\prod_{j=1}^M\mathsf H_{k}^{(r_j)}\middle|\begin{smallmatrix}m+1,s,r_j\in\mathbb Z_{>0}\\Z_m\in\mathfrak Z_m(1)\\m+s+\sum_{j=1}^m r_j=a+b\\\end{smallmatrix} \right\}\subseteq\mathfrak Z_{a+b+1}(1),
\end{align}where $ \mathsf H_{k}^{(r)}\colonequals \sum_{0<n<k}\frac{1}{n^r}$ denotes the $ k$-th harmonic number of order $r$ and the ``$ \subseteq$'' step follows from \cite[Theorem 3.1(a)]{Zhou2022mkMpl}.\end{proof}
\subsection{Application to Broadhurst's conjecture}Now we verify Broadhurst's conjecture on honorary MZVs at even weights \cite[Conjecture 5]{Broadhurst2014MDV}.\begin{corollary}
For all  $w\in\mathbb Z_{>1}$, we have \begin{align}
\frac{\pi\I\Li_{w-1}(\varrho)}{3}+\R\Li_{w,1}(\varrho)\in\mathfrak Z_{w}(1).\label{eq:gw2}
\end{align}In particular, when $w$ is even, we recover  Broadhurst's relation \eqref{eq:BroadhurstConj5}.\end{corollary}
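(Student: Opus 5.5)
The plan is to specialize Theorem \ref{thm:1.5b} to depth $n=2$ and then rewrite $g_{w,2}(\varrho)$ with the shuffle identity \eqref{eq:gw2_prod}, which is the case $n=2$ of \eqref{eq:gwn_prod_expn}. The theorem gives $\R g_{w,2}(\varrho)\in\mathfrak Z_w(1)$ for every $w\ge 2$. By \eqref{eq:gw2_prod} and \eqref{eq:LtoGPL},
\begin{align*}
g_{w,2}(z)=G(1;z)G(\boldsymbol0_{w-2},1;z)-G(\boldsymbol0_{w-2},1,1;z)=-\log(1-z)\Li_{w-1}(z)-\Li_{w-1,1}(z,1),
\end{align*}
using $G(1;z)=\log(1-z)$, $G(\boldsymbol0_{w-2},1;z)=-\Li_{w-1}(z)$ and $G(\boldsymbol0_{w-2},1,1;z)=\Li_{w-1,1}(z,1)$.

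Next I would evaluate at $z=\varrho$. Since $1-\varrho=e^{-\pi i/3}$ lies on the principal branch, $\log(1-\varrho)=-\pi i/3$. Hence
\begin{align*}
\R\bigl[-\log(1-\varrho)\Li_{w-1}(\varrho)\bigr]=\R\Bigl[\frac{\pi i}{3}\Li_{w-1}(\varrho)\Bigr]=-\frac{\pi}{3}\I\Li_{w-1}(\varrho),
\end{align*}
and therefore
\begin{align*}
-\R g_{w,2}(\varrho)=\frac{\pi\I\Li_{w-1}(\varrho)}{3}+\R\Li_{w-1,1}(\varrho,1)\in\mathfrak Z_w(1).
\end{align*}
This is the content of \eqref{eq:gw2}. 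The bookkeeping step I would check most carefully is the weight labelling: weight $w$ pairs $\Li_{w-1}$ with the depth-two MPL $\Li_{w-1,1}$. Rewriting the displayed relation with $w\mapsto w+1$ gives the same statement in the form $\frac{\pi}{3}\I\Li_{w}(\varrho)+\R\Li_{w,1}(\varrho,1)\in\mathfrak Z_{w+1}(1)$. I would state \eqref{eq:gw2} with whichever indexing keeps the weights consistent.

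For Broadhurst's relation \eqref{eq:BroadhurstConj5}, the remaining task is to show that the correction term already lies in $\mathfrak Z_w(1)$ when the polylogarithm order is odd. For odd $s$, $\I\Li_s(e^{i\theta})$ equals $\sum_{k\ge1}\sin(k\theta)/k^s$, a Clausen-type sum. It is a Bernoulli polynomial in $\theta$ for $0<\theta<2\pi$, up to a rational multiple of $\pi^{s}$. At $\theta=\pi/3$ this yields $\I\Li_s(\varrho)\in\mathbb Q\pi^s$. Taking $s=w-1$ odd (so $w$ is even), $\frac{\pi}{3}\I\Li_{w-1}(\varrho)\in\mathbb Q\pi^w=\mathbb Q\zeta_w\subseteq\mathfrak Z_w(1)$. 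Subtracting this term leaves $\R\Li_{w-1,1}(\varrho,1)\in\mathfrak Z_w(1)$ for even $w$, which is Broadhurst's honorary-MZV statement in weight $w$.

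I expect no real obstacle, since all the analytic content is in Theorem \ref{thm:1.5b}. The points needing care are the branch of $\log(1-\varrho)$, the signs in \eqref{eq:LtoGPL}, and the parity condition that makes the Clausen value a rational multiple of $\pi^{w-1}$.
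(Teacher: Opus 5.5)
Your proposal is correct and follows the paper's route: the $n=2$ case of \eqref{eq:gwn_prod_expn}, together with $\log(1-\varrho)=-\pi i/3$, identifies the left-hand side with $-\R g_{w,2}(\varrho)$, and Theorem~\ref{thm:1.5b} places this in $\mathfrak Z_w(1)$. For even $w$, the fact $\I\Li_{w-1}(\varrho)\in\mathbb Q\pi^{w-1}$ removes the correction term; the paper gets this from $L(\chi_{-3},w-1)\in\mathbb Q\sqrt{3}\pi^{w-1}$, you get it from Bernoulli polynomials, and the two are equivalent. You are also right about the indexing: the statement's $\R\Li_{w,1}(\varrho)$ has weight $w+1$, so the depth-two term must be $\R\Li_{w-1,1}(\varrho,1)$ for the weights to match. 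That corrected form is exactly what the paper's own proof and Broadhurst's relation \eqref{eq:BroadhurstConj5} use.
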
\begin{proof}According to   \eqref{eq:gwn_prod_expn}, the left-hand side of  \eqref{eq:gw2} is equal to $ -\R g_{w,2}(\varrho)$, which is representable as a $ \mathbb Q$-linear combination of   MZVs at weight $w\in\mathbb Z_{>1}$.

For $ s>0$, we may express \begin{align}
\I \Li_s(\varrho)=\left( 1+\frac{1}{2^{s-1}} \right)\frac{\sqrt{3}L(\chi_{-3},s)}{2}
\end{align} through a special Dirichlet $L$-function\begin{align}
L(\chi_{-3},s)\colonequals \sum_{n=0}^\infty\left[\frac{1}{(3n+1)^s}-\frac{1}{(3n+2)^s}\right].
\end{align}If $ w$ is an even  positive  integer, then  $ L(\chi_{-3},w-1)$ is a rational multiple of $\sqrt{3}\pi^{w-1}$, which simplifies   \eqref{eq:gw2} to   $\R\Li_{w,1}(\varrho)\in\mathfrak Z_{w}(1)$.
\end{proof}
\begin{remark}At this point, one may regard Au's  \eqref{eq:Au1.5b}  as an extension of Broadhurst's depth-2 relation \eqref{eq:BroadhurstConj5} to arbitrary depths. In the meantime, we note that  Xu's relation \eqref{eq:XuConj} does not    directly transcribe the generic behavior of  $ g_{w,n}(-1)$:  there are  weights $w$ and depths $n\geq1$,  such that Au's sums $ g_{w,n}(-1)$ do not degenerate to MZVs. Symbolic computations of $\R g_{w,n}(e^{2\pi i/N})$ and $\I g_{w,n}(e^{2\pi i/N})$ at small weights (with the aid of Au's \texttt{MultipleZetaValues} package \cite{Au2022a}) also suggest the absence of Galois descents when  $N\in\{3,5,7,8,9,10,12\}$.
\eor\end{remark}

\section{Galois descents from square vertices\label{sec:desc4}}

\subsection{Multiple polylogarithms at $\frac{1+i}2$}Similar to our experience in \S\ref{subsec:desc3},  the identity\begin{align}
\begin{split}\R
g_{m+n,n}\bigg(\frac{1+i}2\bigg)={}&\left.\!\frac1{m!n!}\R\frac{\partial^{m+n}}{\partial u^{m}\partial v^n}\left[ {_2}F_1\left(\begin{array}{@{}c@{}}
1,v \\
1-u \\
\end{array}\middle| \frac{1}{i}\right) -1\right]\right|_{u=v=0}\\={}&\frac1{m!n!}\left.\!\R\frac{\partial^{m+n}}{\partial u^{m}\partial v^n}\left[ \frac{1 }{\mathrm B(1-u-v,v)} \int_0^1 \frac{t^{v-1} (1-t)^{-u-v}}{1+it} \D t-1\right]\right|_{u=v=0}
\end{split}
\end{align}reduces our task to the examination of\begin{align}
\R\frac{1}{(1+it)t}-\frac{1}{t}=-\frac{t}{1+t^2},
\end{align} a rational function whose poles are two vertices of a square inscribed in the unit circle.

\begin{theorem}We have \begin{align}
\int_0^1\frac{t}{1+t^{2}}\log^a t \log^{b+1}(1-t)\D t\in\mathfrak Z_{a+b+2}(2)\label{eq:desc4int}
\end{align}for all $a,b\in\mathbb Z_{\geq0}$.  As a result, Au's relation $\R g_{w,n}\big(\frac{1+i}2\big)\in\mathfrak Z_w(2)$ holds for all $ w\in\mathbb Z_{>0}$ and $ n\in\mathbb Z\cap[1,w]$.
\end{theorem}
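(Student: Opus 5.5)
Since $\frac{t}{1+t^2}=\frac12\big(\frac{1}{t-i}+\frac{1}{t+i}\big)$, I would first write the integral \eqref{eq:desc4int} as a combination of GPLs at $1$ with letters in $\{0,1,i,-i\}$. This puts it a priori in $\mathfrak Z_{a+b+2}(4)$, and the symmetric combination of the letters $\pm i$ shows it is real.

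The real task is the descent to level $2$. Mimicking the proof of Theorem \ref{thm:1.5b}, I would substitute $t\mapsto t^2$ in spirit, using the identity
\begin{align*}
\frac{2t\,\D t}{1+t^2}=\D\log(1+t^2)=\D\,G(-1;t^2).
\end{align*}
I would integrate by parts, differentiating $\log^a t\log^{b+1}(1-t)$. This leaves integrals of the form
\begin{align*}
\int_0^1 \log(1+t^2)\log^{a-1}t\log^{b+1}(1-t)\frac{\D t}{t}
\quad\text{and}\quad
\int_0^1 \log(1+t^2)\log^{a}t\log^{b}(1-t)\frac{\D t}{1-t}.
\end{align*}
The boundary terms vanish because $b+1\ge1$.

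Next I would generate everything through Beta-type integrals. Expanding $\log(1+t^2)=-\sum_{k\ge1}(-1)^k t^{2k}/k$, each piece becomes
\begin{align*}
\sum_{k\ge1}\frac{(-1)^{k-1}}{k}\,\partial_x^{a}\partial_y^{b+1}\mathrm B(2k+x,1+y)\Big|_{x=y=0},
\end{align*}
possibly with shifts. Derivatives of $\Gamma(2k+x)/\Gamma(2k+1+x+y)$ at $x=y=0$ are polynomials in harmonic-type sums $\mathsf H_{2k}^{(r)}$ times $1/(2k)^s$, with MZV coefficients.

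So the integral reduces to alternating Euler-type sums
\begin{align*}
\sum_{k}\frac{(-1)^{k}}{k^{s}}\prod_j \mathsf H_{2k}^{(r_j)}.
\end{align*}
I would then split $\mathsf H_{2k}^{(r)}$ into even and odd parts, namely
\begin{align*}
\mathsf H_{2k}^{(r)}=2^{-r}\mathsf H^{(r)}_{k}+O^{(r)}_{k}+(\text{boundary terms}),
\end{align*}
with $O^{(r)}_k$ the odd-index partial sum. This produces sums over $k$ whose sign $(-1)^k$ combines with parity-split harmonic numbers, i.e.\ CMZVs of level $4$ in general.

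The main obstacle is exactly here: a sum like $\sum(-1)^k k^{-s}\prod O^{(r_j)}_k$ is genuinely of level $4$, because $(-1)^k$ on the index $2k$ corresponds to $i^{2k}$. To force the descent I would again seek a WZ pair, as in \S\ref{subsec:desc3}. Concretely, I would work with the series representation
\begin{align*}
\sum_n \partial^{a+b-1}\Big[\frac{\Gamma(x-\varepsilon)\Gamma(x+\varepsilon)}{x\Gamma(2x)}\,(\text{a factor }2^{\pm 2x}\text{ or }(-1)^x)\Big].
\end{align*}
I would look for a Hou--Sun type pair $(\mathscr F_\varepsilon,\mathscr G_\varepsilon)$ that converts the $n$-sum at $x=n$ into a $k$-sum of non-alternating or purely alternating ($\pm1$) harmonic sums. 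Those lie in $\mathfrak Z(2)$ by \cite[Theorem 3.1(a)]{Zhou2022mkMpl}.

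If no WZ pair is found, my fallback is a symmetry argument at the level of GPLs. The map $t\mapsto\frac{1-t}{1+t}$ permutes the letters $\{0,1,-1,\pm i\}$ and sends $i\mapsto -i$. Combined with complex conjugation, it should show that the relevant words appear in conjugate-symmetric combinations that reduce, via Glanois' criterion, to level $2$.

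Finally, for Au's relation I would assemble the pieces: the $\Gamma$/Beta prefactors contribute MZVs, and $\int t^{-1}\log^a t\log^b(1-t)\,\D t$ contributes MZVs. The remaining term is \eqref{eq:desc4int} after expanding $\partial_u^m\partial_v^n$ of $t^{v}(1-t)^{-u-v}$. The case $n=w$ is direct, since $\R\frac{\log^w(1-z)}{w!}$ at $z=\frac{1+i}2$ involves only $\log2$ and $\pi^2$.
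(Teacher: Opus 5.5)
Your closing reduction of Au's relation to \eqref{eq:desc4int} matches the paper. That reduction uses Euler's integral, the MZV-valued factor from $1/\mathrm B(1-u-v,v)$, the identity $\R\frac{1}{t(1+it)}-\frac1t=-\frac{t}{1+t^2}$, and a separate case $n=w$. The gap is that the heart of the theorem is never established: the descent of \eqref{eq:desc4int} from $\mathfrak Z_{a+b+2}(4)$ to $\mathfrak Z_{a+b+2}(2)$.

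\textbf{The WZ route is unsupported.} As you acknowledge, your series route ends at real parts of level-$4$ Euler sums $\sum_k(-1)^kk^{-s}\prod_j\mathsf H^{(r_j)}_{2k}$. At that point you only propose to look for a WZ pair, and none is exhibited. Your ansatz $\Gamma(x-\varepsilon)\Gamma(x+\varepsilon)/[x\Gamma(2x)]$ (with $2^{\pm2x}$ or $(-1)^x$) is transplanted from the hexagon case, where it had a structural origin. There, $\frac{2t-1}{1-t(1-t)}$ is an exact derivative of a function of $t(1-t)$, and pairing it with powers of $\log\frac{t}{1-t}$ produces the central Beta integrals $\int_0^1t^{x+\varepsilon-1}(1-t)^{x-\varepsilon-1}\D t$ on which the Hou--Sun pairs act. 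Your integration by parts against $\log(1+t^2)=-\sum_{k\ge1}(-1)^kt^{2k}/k$ only yields derivatives of $\mathrm B(2k+x,1+y)$. These are ordinary Euler sums with no hypergeometric term for a WZ pair to act on, and you give no reason the twist $(-1)^k$ on even indices should disappear.

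\textbf{The fallback is not an argument either.} Invariance under complex conjugation does not force descent: $\pi G=-(\pi i)(iG)$, with $G$ Catalan's constant, is real and lies in $\mathfrak Z_3(4)$, but is not expected to lie in $\mathfrak Z_3(2)$. Invoking Glanois' criterion would require verifying coaction conditions that you never formulate. Also, $t\mapsto\frac{1-t}{1+t}$ does not permute $\{0,1,-1,\pm i\}$; it exchanges $-1$ and $\infty$. A smaller slip: for $a=0$ the antiderivative $\frac12\log(1+t^2)$ leaves a divergent boundary term at $t=1$, and your second integral diverges. You need $\frac12\log\frac{1+t^2}{2}$ there.

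\textbf{The missing idea} is to use that involution concretely, through the identity $\int_0^1\frac{t}{1+t^2}\big[f(t)+f\big(\frac{1-t}{1+t}\big)\big]\D t=\int_0^1\frac{f(t)}{1+t}\D t$. Its right-hand side involves only the letters $0,\pm1$, so it lands in level $2$. The paper proceeds as follows.
\begin{enumerate}
\item It expresses $\log(1-t)$ and $\log(1+t)$ through $\log t$ and $\log\frac{1-t}{1+t}$, which the involution exchanges, and $\log\frac{1-t^2}{t}$, which it sends to $2\log2-\log\frac{1-t^2}{t}$.
\item It uses the identity to peel off all powers of $\log\frac{1-t^2}{t}$. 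This actually proves Au's stronger Conjecture 1.6, with $\log^c(1+t)$ included.
\item It is left with $\mathscr K_{a,b}=\int_0^1\frac{t}{1+t^2}\log^at\log^b\frac{1-t}{1+t}\D t$, together with the reciprocity $\mathscr K_{a,b}+\mathscr K_{b,a}\in\mathfrak Z_{a+b+1}(2)$.
\item The information the involution cannot see comes from integrating $\frac{z}{1+z^2}\log^{a+1}z\log^{b+1}\frac{z-1}{z+1}$ just above the real axis and closing upward. The only contribution is the residue at $z=i$, a rational multiple of $(\pi i)^{a+b+1}$.
\item Taking real parts gives relations for $\mathscr K_{a,b}$ modulo $\pi^{2\ell}\mathscr K_{a',b'}$ of lower degree. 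With reciprocity, these settle all cases where $a$ or $b$ is odd.
\item The case of $a,b$ both even is closed by a linear-independence count on symmetrized versions of the reciprocity.
\end{enumerate}
Induction on the degree then gives \eqref{eq:desc4int}.
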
\begin{proof}We note that Au proposed a generalization of  \eqref{eq:desc4int}  for all $ a,b,c\in\mathbb Z_{\geq0}$ \cite[Conjecture 1.6]{Au2022a}:\begin{align}
 \int_0^1\frac{t}{1+t^{2}}\log^a t \log^{b}(1-t)\log^{c}(1+t)\D t\in\mathfrak Z_{a+b+c+1}(2),\label{eq:AuConj1.6}
\end{align}whereupon na\"ive applications of  GPL recursion \eqref{eq:GPL_rec}  and GPL shuffles \eqref{eq:GPL_shuffle} would produce summands that are in $\mathfrak Z_{a+b+c+1}(4)$. We will  prove this general conjecture of Au, by targeting its equivalent form \begin{align}\begin{split}\mathscr J_{a,b,c}\colonequals{}&\int_{0}^{1}\frac{t}{1+t^{2}}\left(\log t+\log\frac{1-t}{1+t}\right)^{a}\left(\log t-\log\frac{1-t}{1+t}\right)^{b}\log^{c}\frac{1-t^2}{t}\D t\in\mathfrak Z_{a+b+c+1}(2)
\end{split}\tag{\ref{eq:AuConj1.6}$'$}\label{eq:AuConj1.6'}
\end{align} for all $ a,b,c\in\mathbb Z_{\geq0}$.  

Without loss of generality, we momentarily assume that $ b+c$ is even and $c>0$. For any   integrable function $ f(t)$ on the open unit interval, we have \begin{align}
\int_0^1\frac{t}{1+t^{2}}\left[f(t)+f\bigg(\frac{1-t}{1+t}\bigg) \right]\D t=\int_{0}^1\frac{f(t)\D t}{1+t}\label{eq:f_01_automorph}
\end{align}by direct variable substitution. In particular, we have\begin{align}
\begin{split}&\int_{0}^{1}\frac{t}{1+t^{2}}\left(\log t+\log\frac{1-t}{1+t}\right)^{a}\left(\log t-\log\frac{1-t}{1+t}\right)^{b}\\{}&\times\left[\log^{c}\frac{1-t^2}{t}+\left( \log\frac{1-t^2}{t} -2\log2\right)^{c}\right]\D t\\={}&\int_{0}^1\frac{1}{1+t}\left(\log t+\log\frac{1-t}{1+t}\right)^{a}\left(\log t-\log\frac{1-t}{1+t}\right)^{b}\log^{c}\frac{1-t^2}{t}\D t,\label{eq:Jabc_desc_prep}
\end{split}
\end{align}where\begin{align}
\frac{1-\left(\frac{1-t}{1+t}\right)^2}{\frac{1-t}{1+t}}=\frac{4t}{1-t^2}
\end{align} and   $ \log2=-\Li_1(-1)\in\mathfrak Z_{1}(2)$.
 Applying GPL recursion \eqref{eq:GPL_rec} and GPL shuffles \eqref{eq:GPL_shuffle}  to the right-hand side of \eqref{eq:Jabc_desc_prep},  we see that it belongs to $ \mathfrak Z_{a+b+c+1}(2)$. Put differently, we have\begin{align}
\mathscr J_{a,b,c}\in \mathfrak Z_{a+b+c+1}(2)+\Span_{\mathbb Q}\left\{(\log2)^m\mathscr J_{a,b',c'}\middle|\begin{smallmatrix}m,b',c'\in\mathbb Z_{\geq0}\\c'< c\\ m+b'+c'=b+c\end{smallmatrix}\right\}\label{eq:Jabc_c_rec}
\end{align} under the working hypothesis that  $ b+c$ is even and $c>0$. Under the same  assumption, we also have the following variation on \eqref{eq:Jabc_desc_prep}:\begin{align}
\begin{split}&\int_{0}^{1}\frac{t}{1+t^{2}}\left(\log t+\log\frac{1-t}{1+t}\right)^{a}\left(\log t-\log\frac{1-t}{1+t}\right)^{b}\\{}&\times\frac{\log (1+t)\log^{c}\frac{1-t^2}{t}-\log \frac{1+t}{2}\left( \log\frac{1-t^2}{t} -2\log2\right)^{c}}{\log2}\D t\\={}&\int_{0}^1\frac{1}{1+t}\left(\log t+\log\frac{1-t}{1+t}\right)^{a}\left(\log t-\log\frac{1-t}{1+t}\right)^{b}\frac{\log (1+t)\log^{c}\frac{1-t^2}{t}}{\log2}\D t,\label{eq:Jabc_desc_prep1}
\end{split}
\end{align}where elementary algebra reveals that \begin{align}1+\frac{1-t}{1+t}={}&\frac{2}{1+t},\\
\log (1+t)={}&\log\frac{1-t^2}{t} -\left(\log t-\log\frac{1-t}{1+t}\right),
\end{align} and\begin{align}
\begin{split}&\frac{\log (1+t)\log^{c}\frac{1-t^2}{t}-\log \frac{1+t}{2}\left( \log\frac{1-t^2}{t} -2\log2\right)^{c}}{\log^{2}2}-(2c+1)\frac{\log^{c}\frac{1-t^2}{t}+\left( \log\frac{1-t^2}{t} -2\log2\right)^{c}}{2\log2}\\&{}-\frac{c}{\log2}\left( \log t-\log\frac{1-t}{1+t} \right)\left[ \log^{c-1}\frac{1-t^2}{t}+\left( \log\frac{1-t^2}{t} -2\log2\right)^{c-1} \right]\\\in{}&\mathbb Z\left[ \log2,\log\frac{1-t^2}{t},\log t-\log\frac{1-t}{1+t}\right].
\end{split}
\end{align}Here,  the left-hand side of the last equation goes like \begin{align}
c\log^{c-1}\frac{1-t^2}{t}+O\left( \log^{\max\{0,c-2\}}\frac{1-t^2}{t}\right).
\end{align}
Rewriting the right-hand side of \eqref{eq:f_01_automorph} as\begin{align}\frac12
\int_{0}^1\frac{1}{1+t}\left[f(t)+f\bigg(\frac{1-t}{1+t}\bigg) \right]\D t,
\end{align}
we have effectively shown that \eqref{eq:Jabc_c_rec} remains valid when $c$ is traded for $c-1$. In other words,  \eqref{eq:Jabc_c_rec} holds true without any constraint on the parity of    $ b+c$. This means that we may build \eqref{eq:AuConj1.6'} inductively on the situations where $c=0$.

Now we turn our attention to the claim that $ \mathscr J_{a,b,0}\in\mathfrak Z_{a+b+1}(2)$, or equivalently\begin{align}
\mathscr K_{a,b}\colonequals{}&\int_{0}^{1}\frac{t}{1+t^{2}}\log ^{a}t\log^{b}\frac{1-t}{1+t}\D t\in\mathfrak Z_{a+b+1}(2).\label{eq:Kab_defn}
\end{align}
The corresponding borderline cases are easy: we have \begin{align}
\mathscr K_{a,0}=\int_{0}^{1}\frac{t}{1+t^{2}}\log ^{a}t\D t\xlongequal{u=t^2}\frac{1}{2^{a+1}}\int_0^1\frac{\log ^{a}u\D u}{1+u}\in\mathfrak Z_{a+1}(2)
\end{align}and \begin{align}
\mathscr K_{0,b}=\int_{0}^{1}\frac{t}{1+t^{2}}\log^{b}\frac{1-t}{1+t}\D t\xlongequal{\text{\eqref{eq:f_01_automorph}}}\int_{0}^1\frac{\log ^{b}t\D t}{1+t}-\mathscr K_{b,0}\in\mathfrak Z_{b+1}(2).
\end{align}One also has the following reciprocity law: \begin{align}
\mathscr K_{a,b}+\mathscr K_{b,a}\xlongequal{\text{\eqref{eq:f_01_automorph}}}\int_{0}^1\frac{1}{1+t}\left[\log ^{a}t\log^{b}\frac{1-t}{1+t}+\log ^{b}t\log^{a}\frac{1-t}{1+t}\right]\D t\in\mathfrak Z_{a+b+1}(2).\label{eq:Krecip}
\end{align}As before, we will reduce $ \mathscr K_{a,b}$ to AMZVs and $ \mathbb Q$-linear combinations of $ Z_m\mathscr K_{a',b'}$ where $ Z_m\in\mathfrak Z_m(2)$  and $ m=a+b-a'-b'>0$. We need to distinguish several scenarios, according to the parities of $a$ and $b$, in the next   paragraph.

We evaluate an absolutely convergent integral\begin{align}
\frac{1}{(2\pi i)^{2}}\int_{i0^+-\infty}^{i0^++\infty}\frac{z}{1+z^{2}}\log ^{a+1}z\log^{b+1}\frac{z-1}{z+1}\D z
\end{align}in two ways. First, closing the contour upwards, and picking up the residue at $z=i$, we find \begin{align}
\frac{1}{4\pi i}\left( \frac{\pi i}{2} \right)^{a+b+2}.
\end{align}Second, applying a transformation $ z=1/t$ to $ \R z>1$, we get
\begin{align}
\begin{split}&\frac{1}{(2\pi i)^{2}}\int_{i0^+}^{i0^++\infty}\frac{z}{1+z^{2}}\log ^{a+1}z\log^{b+1}\frac{z-1}{z+1}\D z\\={}&\frac{1}{(2\pi i)^{2}}\int_{0}^{1}\frac{t}{1+t^{2}}\log ^{a+1}t\left[ \left(\log\frac{1-t}{1+t}+\pi i\right)^{b+1}+(-1)^{a}\log^{b+1}\frac{1-t}{1+t} \right]\D t\\{}&-\frac{(-1)^{a}}{(2\pi i)^{2}}\int_{0}^{1}\frac{1}{t}\log ^{a+1}t\log^{b+1}\frac{1-t}{1+t}\D t;\end{split}
\intertext{similarly, one has}
\begin{split}&\frac{1}{(2\pi i)^{2}}\int_{i0^+-\infty}^{i0^+}\frac{z}{1+z^{2}}\log ^{a+1}z\log^{b+1}\frac{z-1}{z+1}\D z\\={}&-\frac{(-1)^{a}}{(2\pi i)^{2}}\int_{0}^{1}\frac{t}{1+t^{2}}(\log t-\pi i)^{a+1}\left(-\log\frac{1-t}{1+t}\right)^{b+1}\D t\\{}&-\frac{1}{(2\pi i)^{2}}\int_{0}^{1}\frac{t}{1+t^{2}}(\log t+\pi i)^{a+1} \left(-\log\frac{1-t}{1+t}+\pi i\right)^{b+1}\D t\\{}&+\frac{(-1)^{a}}{(2\pi i)^{2}}\int_{0}^{1}\frac{1}{t}(\log t-\pi i)^{a+1}\left(-\log\frac{1-t}{1+t}\right)^{b+1}\D t.
\end{split}
\end{align}Reading off the real parts of the last two displayed equations, we arrive at \begin{align}
\begin{split}&\frac{[(-1)^a+1] [(-1)^b+1]}{\pi^{2}}\int_{0}^{1}\frac{ A^{a+1} B^{b+1}t\D t}{1+t^{2}}+(-1)^b (a+1)  (b+1) \int_0^1\frac{ A^{a} B^{b}t\D t}{1+t^{2}}\\&{}-\int_0^1\frac{[(-1)^a+1] (-1)^b a (a+1) B^2+ [(-1)^b+1]b (b+1)A^{2}}{2}\frac{ A^{a-1} B^{b-1}t\D t}{1+t^{2}}\\&{}-\frac{(-1)^{a}}{\pi ^{2}}\R\int_{0}^{1}\frac{A^{a+1} B^{b+1}-(A-\pi i)^{a+1}(-B)^{b+1}}{t}\D t
\\\in{}&\mathfrak Z_{a+b+1}(2)+\Span_{\mathbb Q}\left\{\pi^{2\ell}\mathscr K_{a',b'}\middle|\begin{smallmatrix}\ell\in\mathbb Z_{>0},a'\in\mathbb Z\cap[0,a],b'\in\mathbb Z\cap[0,b]\\2\ell+a'+b'=a+b\end{smallmatrix}\right\},
\end{split}
\end{align}where \begin{align}
A\colonequals\log t,\quad B\colonequals\log\frac{1-t}{1+t} .
\end{align}In particular, this implies \begin{align}
\mathscr K_{a,b}\in \mathfrak Z_{a+b+1}(2)+\Span_{\mathbb Q}\left\{\pi^{2\ell}\mathscr K_{a',b'}\middle|\begin{smallmatrix}\ell\in\mathbb Z_{>0},a'\in\mathbb Z\cap[0,a],b'\in\mathbb Z\cap[0,b]\\2\ell+a'+b'=a+b\end{smallmatrix}\right\}\label{eq:a1b1prep}
\end{align} when both  $ a$ and  $b$ are odd, as well as \begin{align}
(b+1)\mathscr K_{a,b}-a\mathscr K_{a-1,b+1}\in \mathfrak Z_{a+b+1}(2)+\Span_{\mathbb Q}\left\{\pi^{2\ell}\mathscr K_{a',b'}\middle|\begin{smallmatrix}\ell\in\mathbb Z_{>0},a'\in\mathbb Z\cap[0,a],b'\in\mathbb Z\cap[0,b]\\2\ell+a'+b'=a+b\end{smallmatrix}\right\}\label{eq:a0b1prep}
\end{align}when $a$ is even and $b$ is odd. In view of \eqref{eq:Krecip}, the last displayed equation is unscathed when its left-hand side is replaced by  $ a\mathscr K_{b+1,a-1}-(b+1)\mathscr K_{b,a}$. This effectively extends  \eqref{eq:a0b1prep} to all the cases where $ a+b$ is odd. Moreover, when  $ a+b$ is odd,  \eqref{eq:Krecip} specializes to \begin{align}
\mathscr K_{(a+b+1)/2,(a+b-1)/2}+\mathscr K_{(a+b-1)/2,(a+b+1)/2}\in\mathfrak Z_{a+b+1}(2).
\end{align}    As we combine this linearly with   \eqref{eq:a0b1prep}, we see that  \eqref{eq:a1b1prep}  is true when $ \mathscr K_{a,b}$ on its left-hand side becomes either   $ \mathscr K_{(a+b+1)/2,(a+b-1)/2}$ or $ \mathscr K_{(a+b-1)/2,(a+b+1)/2}$. After repeated invocations of  \eqref{eq:a0b1prep}, one realizes that   \eqref{eq:a1b1prep}  holds whenever the non-negative integers $ a$ and $b$ add up to an odd number.  It remains to treat the cases where both  $ a$ and  $b$ are  even. For  a fixed even number $S$ and integers $ n\in\mathbb Z\cap[0,S/2]$, consider the following analogs of  \eqref{eq:Krecip}:\begin{align}
\begin{split}&\int_{0}^1\frac{t}{1+t^{2}}\log ^{n}t\log^{n}\frac{1-t}{1+t}\left[\left(\log t+\log\frac{1-t}{1+t}\right)^{S-2n}+(-1)^{n}\left(\log t-\log\frac{1-t}{1+t}\right)^{S-2n}\right]\D t\\={}&\frac{1}{2}\int_{0}^1\frac{1}{1+t}\log ^{n}t\log^{n}\frac{1-t}{1+t}\left[\left(\log t+\log\frac{1-t}{1+t}\right)^{S-2n}+(-1)^{n}\left(\log t-\log\frac{1-t}{1+t}\right)^{S-2n}\right]\D t\\\in{}&\mathfrak Z_{S+1}(2),
\end{split}\label{eq:recip'}
\end{align} where the integrands are rational functions of $t$ times  bivariate polynomials in even powers of $\log t$ and $\log\frac{1-t}{1+t}$. When  $ S/2$ is odd, the case  $ n=S/2$ produces a trivial identity $ 0\in\mathfrak Z_{S+1}(2)$. Otherwise, one obtains linearly independent\footnote{As a quick access to the linear independence of the bivariate polynomials in question, one may map $ \log t$ to $ e^{i\phi}$ and $ \log\frac{1-t}{1+t}$ to $ e^{-i\phi}$. } integrands from \eqref{eq:recip'}, which cover   $ \mathscr K_{a,b}\in \mathfrak Z_{a+b+1}(2)$ for all the pairs of even non-negative   $ a$ and  $b$   summing to  a preset value $S$.\footnote{Replacing $ (-1)^n$ by $ (-1)^{n+1}$ in \eqref{eq:recip'}, one may also strengthen    \eqref{eq:a1b1prep} into    $ \mathscr K_{a,b}\in \mathfrak Z_{a+b+1}(2)$, when $a$ and $b$ are both odd.}
                  
Thus far, we have completed an inductive proof of \eqref{eq:Kab_defn} for all $ a,b\in\mathbb Z_{\geq0}$, which in turn, settles both   \eqref{eq:desc4int} and  \eqref{eq:AuConj1.6}. \end{proof}\subsection{Landen-type transformation}
A generalization of the Landen-type transformation formula \eqref{eq:gw1Landen} will bring us a verification of Au's \eqref{eq:Au1.5a2}.\begin{theorem}For $ w\in\mathbb Z_{>0}$ and $ n\in\mathbb Z\cap[1,w]$, we have
\begin{align}
g_{w,n}\bigg(\frac z{z-1}\bigg)={}&\sum_{j=n}^w(-1)^j\binom{j-1}{n-1}g_{w,j}(z).\label{eq:gwn_Landen}
\end{align} As a result, Au's relation $\R g_{w,n}(i)\in\mathfrak Z_w(2)$ holds for all $ w\in\mathbb Z_{>0}$ and $ n\in\mathbb Z\cap[1,w]$.
\end{theorem}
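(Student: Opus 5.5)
The plan is to prove \eqref{eq:gwn_Landen} by induction on the weight $w$. Set $y\colonequals\frac{z}{z-1}$ and apply the equation of motion \eqref{eq:gwn_evolv} to both sides, in the same spirit as the proofs of Proposition \ref{prop:G_alt_avg_desc} and \eqref{eq:gwn_prod_expn}. Both sides are holomorphic near $z=0$, say on a disk $|z|<\frac12$, where also $|y|<1$. Both sides vanish at $z=0$ because $g_{w,n}(0)=0$. So it suffices to show that their $z$-derivatives agree on that disk. Analytic continuation and continuity then extend the identity to the full region where $|z|\le1$, $|y|\le1$ and $z\neq1$. Throughout I adopt the conventions $g_{w,0}\equiv0$, $g_{w-1,w}\equiv0$ and $\binom{k}{-1}=0$.

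The base case is $w=1$. Here $g_{1,1}(z)=\log(1-z)$ and $g_{1,1}(y)=\log\frac{1}{1-z}=-g_{1,1}(z)$, which is \eqref{eq:gwn_Landen} with $n=1$. For the inductive step, use $\frac{\D y}{\D z}=-\frac{1}{(z-1)^2}$ and $y-1=\frac{1}{z-1}$. Then \eqref{eq:gwn_evolv} gives
\begin{align*}
\frac{\partial g_{w,n}(y)}{\partial z}=\left(\frac1z-\frac1{z-1}\right)g_{w-1,n}(y)-\frac{g_{w-1,n-1}(y)}{z-1}.
\end{align*}
Differentiating the right-hand side of \eqref{eq:gwn_Landen} with \eqref{eq:gwn_evolv}, the coefficient of $\frac1z$ is $\sum_{j}(-1)^j\binom{j-1}{n-1}g_{w-1,j}(z)$. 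By the induction hypothesis this equals $g_{w-1,n}(y)$. After reindexing $k=j-1$, the coefficient of $\frac{1}{z-1}$ is $-\sum_k(-1)^k\binom{k}{n-1}g_{w-1,k}(z)$. Pascal's rule $\binom{k}{n-1}=\binom{k-1}{n-1}+\binom{k-1}{n-2}$ splits this into two sums, and the induction hypothesis identifies them with $-g_{w-1,n}(y)-g_{w-1,n-1}(y)$. These match the two coefficients displayed above, which closes the induction.

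The main obstacle is bookkeeping at the edges of the summation ranges. One needs the lower limits $j=n$ versus $k=n-1$ to be absorbed correctly, which holds because $\binom{n-2}{n-1}=0$. One also needs the cases $n=1$ and $n=w$ to be consistent with the conventions, since $g_{w-1,w}=0$ kills the top term. As a cross-check, I would also verify the identity at the level of the generating function $F_z(u,v)\colonequals\sum_{m,n}g_{m+n,n}(z)u^mv^n$ from \eqref{eq:gwn_dd_2F1}--\eqref{eq:gwn_dd_2F1Pfaff}. The right-hand side of \eqref{eq:gwn_Landen} generates $\frac{v}{u+v}F_z(u,-u-v)$. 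The claim then reduces to a contiguous relation between ${}_2F_1(1,v;1-u;z)$ and $(1-z)^{-u-v}{}_2F_1(-u,-u-v;1-u;z)$, obtainable through Euler's transformation.

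For the consequence, take $z=\frac{1+i}{2}$, so that $\frac{z}{z-1}=-i$. The coefficients of $g_{w,n}$ are real, so $g_{w,n}(-i)=\overline{g_{w,n}(i)}$. Hence
\begin{align*}
\R g_{w,n}(i)=\R g_{w,n}(-i)=\sum_{j=n}^w(-1)^j\binom{j-1}{n-1}\R g_{w,j}\bigg(\frac{1+i}{2}\bigg)\in\mathfrak Z_w(2),
\end{align*}
where the last step uses the preceding theorem, which gives $\R g_{w,j}\big(\frac{1+i}2\big)\in\mathfrak Z_w(2)$ for every $j$.
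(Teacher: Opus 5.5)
Your proposal is correct and is essentially the paper's argument: you transform the equation of motion \eqref{eq:gwn_evolv} under $z\mapsto\frac{z}{z-1}$, check via Pascal's rule that the right-hand side of \eqref{eq:gwn_Landen} obeys the same recursion with the same vanishing data at $z=0$, and then specialize to $z=\frac{1+i}{2}$ (where $\frac{z}{z-1}=-i$), using that $g_{w,n}$ has real Taylor coefficients together with the preceding theorem. The only difference is that you make explicit the index bookkeeping at the ends of the summation ranges, which the paper leaves to the reader; your generating-function cross-check through Euler's transformation is also valid.
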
\begin{proof}One may construct an equation of motion  for the left-hand side of \eqref{eq:gwn_Landen}, by applying a  variable transformation to  \eqref{eq:gwn_evolv}, before realizing that the right-hand side of \eqref{eq:gwn_Landen} evolves in the same way, with identical initial and boundary conditions.    

 Specializing  \eqref{eq:gwn_Landen} to $ z=\frac{1+i}{2}$, we get\begin{align}
\R g_{w,n}(i)=\R g_{w,n}(-i)=\sum_{j=1}^w(-1)^j\binom{j-1}{n-1}\R g_{w,j}\bigg(\frac{1+i}2\bigg),
\end{align}thereby concluding our proof. 
\end{proof}

\end{document}